\newcounter{relctr} 
\newcommand\labelrel[2]{%
  \begingroup
    \refstepcounter{relctr}%
    \stackrel{\textnormal{(\theequation.\arabic{relctr})}}{\mathstrut{#1}}%
    \originallabel{#2}%
  \endgroup
}
\definecolor{Bnavy}{RGB}{0, 66, 128}
\definecolor{Bdust}{RGB}{140,179,217}
\definecolor{Bsugarpaper}{RGB}{198, 217, 236}
\definecolor{guppiegreen}{rgb}{0.0, 1.0, 0.5}
\definecolor{frenchrose}{rgb}{0.96, 0.29, 0.54}
\theoremstyle{plain}
\newtheorem{theorem}{Theorem}[section]
\newtheorem{lemma}[theorem]{Lemma}
\newtheorem{corollary}[theorem]{Corollary}
\newtheorem{proposition}[theorem]{Proposition}
\newtheorem{remark}[theorem]{Remark}
\newtheorem{notation}[theorem]{Notation}
\theoremstyle{definition}
\newtheorem{definition}[theorem]{Definition}
\newtheorem{example}[theorem]{Example}
\newcommand{\Noise}{\mathrm{Ns}}
\newcommand{\Amp}{\mathrm{Amp}}
\newcommand{\Pseudom}{\mathrm{Pseudom}}
\newcommand{\Vect}{\mathrm{Vect}}
\newcommand{\A}{\mathcal{A}}
\newcommand{\Pos}{\mathcal{Q}}
\newcommand{\N}{\mathbb{N}}
\newcommand{\ch}{\mathrm{Ch}}
\newcommand{\id}{\mathrm{id}}
\newcommand{\AlgInt}{\mathfrak{X}^{\textnormal{int}}}
\newcommand{\Hom}{\textnormal{Hom}}
\newcommand{\define}[1]{{\bf \boldmath{#1}}}
\newcommand{\frightarrow}{\scalebox{.4}[.3]{%
  \clipbox{0pt 0pt 2pt 0pt}{$\blacktriangleright$}\kern1.8pt}}
\newcommand{\fleftarrow}{\scalebox{.4}[.3]{%
  \kern1.8pt\clipbox{2pt 0pt 0pt 0pt}{$\blacktriangleleft$}}}
\newcommand{\subsetr}{\mathrel{%
  \stackengine{-1.1pt}{\subseteq}{\frightarrow\kern.3pt}{U}{r}{F}{T}{S}}}
\newcommand{\subsetl}{\mathrel{%
  \stackengine{-1.1pt}{\subseteq}{\kern.4pt\fleftarrow}{U}{l}{F}{T}{S}}}
\newcommand{\dirdiam}{\vv{\mu}}
\newcommand\obullet[1]{\ThisStyle{\ensurestackMath{%
  \stackon[1pt]{\SavedStyle#1}{\SavedStyle\kern.6\LMpt\bullet}}}}
\newcommand{\cont}{\overset{\circ}{\mu}}
\DeclareMathOperator{\coker}{coker}
\DeclareMathOperator{\ob}{ob}
\newcommand{\LI}[1]{#1\mathrm{LI}}
\newcommand{\NA}{\mathrm{NA}}
\newcommand{\AN}{\mathrm{AN}}
\newcommand{\AmpMeas}[1][\alpha]{\mu_{#1}}
\newcommand{\dist}{\mathrm{d}}
\newcommand{\shift}{\text{\textvarsigma}}
\newcommand{\Field}{\mathbb{F}}
\newcommand{\hilb}[1]{\textnormal{Hilb}\left(#1\right)}
\newcommand{\hilbp}[2]{\textnormal{Hilb}^{\, #2}\left(#1\right)}
\newcommandx{\permodx}[2][1= \Pos, 2=\mathfrak X]{\mathrm{PerM}_{#2}(#1)}
\newcommand{\permod}[1][n]{\mathrm{PerM}_{\mathfrak C}(\mathbb{R}^{#1})}
\newcommand{\permodq}[1]{\mathrm{PerM}(#1)}
\newcommand{\pnorm}[2][p]{||#2||_{#1}}
\newcommand{\eps}{\varepsilon}
\newcommand{\posr}{\left[0,\infty\right)}
\newcommand{\posrc}{\left[0,\infty\right]}
\newcommand{\ns}{\mathcal{S}}
\newcommand{\cCubes}{\underline{\mathfrak C}}
\title{Amplitudes in persistence theory}
\author{Barbara Giunti\footnote{Department of Mathematics and Statistics, SUNY - University at Albany, Albany, NY, USA, and Institute of Geometry, Graz University of Technology, Graz, Austria},
John S.\ Nolan\footnote{Department of Mathematics, UC Berkeley, Berkeley, CA, US},
Nina Otter\footnote{Department of Mathematics, UCLA, Los Angeles, CA, US, and School of Mathematical Sciences, Queen Mary University of London, London, Unitek Kingdom}, Lukas Waas\footnote{Department of Mathematics, Heidelberg University, Germany}
}
\date{}
\begin{document}

\maketitle

\begin{abstract}
The use of persistent homology in applications is justified by the validity of certain stability results. 
At the core of such results is a notion of distance between the invariants that one associates with data sets.  
Here we introduce a general framework to compare distances and invariants in multiparameter persistence, where there is no natural choice of invariants and distances between them. 
We define amplitudes, monotone, and subadditive invariants that arise from assigning a non-negative real number to objects of an abelian category. 
We then present different ways to associate distances to such invariants, and we provide a classification of classes of amplitudes relevant to topological data analysis. 
In addition, we study the the relationships as well as the discriminitative power of such amplitude distances arising in topological data analysis scenarios.
\end{abstract}

\section{Introduction}

One-parameter persistent homology (PH) is, arguably, the most successful method in Topological Data Analysis (TDA), since it has been used in a variety of applications, ranging from nanoporous materials \cite{lee_quantifying_2017} to pulmonary diseases \cite{belchi_lung_2018}, with new preprints being released on a weekly basis\footnote{A database of applications of methods in TDA, including persistent homology, is being maintained at \cite{database}, and at the time of writing it includes more than two hundred applications of persistent homology. The entries in the database can be filtered by application area, data format, and topological methods.}. 

In one-parameter PH, one associates a $1$-parameter family of spaces to a data set, and by computing the homology of this family one then extracts algebraic invariants, called ``barcodes'', which give a summary of how topological invariants, such as the number of connected components, holes, and voids, evolve as the parameter value changes. 
PH can meaningfully be used for data analysis thanks to stability results, which, roughly, guarantee that the pipeline from the input data to the barcodes is Lipschitz, with respect to appropriate notions of distances on the input data and barcodes. 
Such distances are well-understood from a theoretical point of view since they rely on matchings between intervals in barcodes, and they differ in how the cost between matchings is computed.
In what is called the ``bottleneck distance'', one considers the cost of matching the largest intervals, while in what are generally called ``$p$-Wasserstein distances'' one takes the weighted sum of the costs of the matching over all the intervals. 
Of these distances, the bottleneck distance has been most widely used in applications \cite{atom-specific_2020, hofer_deep_2017,rohrscheidt_topological_2023} and has been studied the most from a theoretical point of view. 
However, for many applications, the bottleneck distance is too coarse, and there have recently been renewed efforts in studying stability results for $p$-Wasserstein distances \cite{skraba_turner}.

In many applications, one is interested in studying data sets across multiple parameters, and thus in what is called ``multiparameter persistent homology'' one wishes to study the homology of a multiparameter family of spaces associated with a data set. 
It is well-known that there is no generalization of the barcode for multiparameter families of spaces \cite{CZ09}. 
In practice, one thus chooses invariants that are meaningful for a specific problem, but which do not give a complete characterization of the resulting persistence module.  

Choosing an appropriate invariant in multiparameter persistence requires defining what it means to be ``persistent'', so as to be able to distinguish between topological signal and noise. 
To do this, one could first focus either on finding an appropriate notion of signal, or of noise.
Several different approaches, focused on finding an appropriate notion of signal, have been taken in the literature. 
The first invariant that was suggested is the ``rank invariant'' \cite{CZ09}, which encodes the information on the ranks of structure morphisms in the module.
More recently, new invariants have been proposed, thanks to the introduction of new techniques into the field, stemming from representation theory \cite{buchet2018realizations,BotnanLO20,BC20}, and commutative algebra \cite{HOST,T19,miller2020essential, BOO21}. 
To be able to study the stability of such invariants, one usually needs to first associate suitable distances with them. 
Studying distances associated with such choices of invariants is currently one of the most active areas of research in TDA. Several different notions have been proposed, including
the interleaving distance which, in an appropriate sense, generalizes the bottleneck distance for multiparameter persistence, see \cite{Bubenik2014,isometry1,Lesnick2015}. 
More recently, generalizations of Wasserstein-type distances and $\ell^{p}$-distances have been studied in \cite{BSS18} and \cite{bjerkevik2021ellpdistances}, respectively.

In a complementary line of work, the authors of \cite{SC+19} proposed an approach to study notions of ``noise'' for multiparameter persistence modules, with the idea of stabilizing invariants by minimizing them on disks, rather than proving a stability result for each of them.
Such an approach to minimizing invariants was also taken, in the $1$-parameter setting, in \cite{Bauer2017}.

In our work, we take a different perspective: we focus on classes of invariants that capture a certain notion of ``size'' or ``measure'' of persistence and on ways of associating distances with such invariants.
Specifically, given an abelian category $\A$, we are interested in studying real-valued maps $\alpha \colon \ob \A\to [0,\infty]$ that mimic the properties that we would expect a size function to satisfy. 
Namely, we would like the map to be monotone with respect to subobjects and quotients, and to be subadditive with respect to short exact sequences (see Section \ref{S:amplitudes}). 
We call such functions \textit{amplitudes}. 
One central property of amplitudes is that one may associate to each such measure of ``size'' an associated path-distance, which roughly measures the cost of transforming one object into another in terms of the amplitude. Compared to the general cost functions of \cite{BSS18}, the arising path distances have the technical advantage that one only needs to consider spans, instead of general paths to compute the distance.
It turns out that a similar observation in a complementary setting was already made in the framework of noise system in \cite{SC+19,oliver}. 
In fact, as we demonstrate in \cref{S:amplitudes}, the theories of amplitudes and noise systems and their associated metrics are essentially equivalent (see specifically \cref{prop:equ_of_cat,cor_commutative diagram}).  

In \cref{S:stability} we explain how the most prominent distances used in topological data analysis (including Bottleneck and Wasserstein distances \cite{Bubenik2014,isometry1,Lesnick2015,oudot,skraba_turner}) all arise from the distance associated to an amplitude (up to equivalence of metrics). 
We give an overview of such distances and illustrate how this perspective can be used to expose relationships between distances in terms of the relationships of their amplitudes.

Since our motivation stems from studying invariants arising from $n$-parameter persistence modules, we then proceed to investigate the class of amplitudes defined on categories of multi-parameter modules.
To an amplitude $\alpha$ defined for $n$-parameter persistence modules, we associate a function $\mu_{\alpha}$ defined on intervals in $\mathbb R^n$, by evaluating $\alpha$ on interval modules. 
We then study $\alpha$ in terms of this function $\mu_{\alpha}$.
In \cref{S_classification_amplitudes} we study two classes of amplitudes which turn out to be determined entirely in terms of their functions on intervals, called \textit{quasi-simple} and \textit{additive} amplitudes, respectively. 
The former is characterized by the property of behaving like a maximum under direct sums of persistence modules. 
Their associated function on intervals behaves like computing the diameter of a set in the direction of the positive cone. We call such functions on intervals \textit{directed diameters} (see \cref{def_directeddiam}). 
We then prove that passing to functions on intervals induces a bijection between quasi-simple amplitudes on sufficiently constructible multiparameter persistence modules and directed diameters on the set of (constructible) intervals of the underlying poset (\cref{thm:classification_of_quasi_simple_amp}), thus providing a complete classification of quasi-simple amplitudes. 
This provides an extension of a classification result of \cite{oliver}.
In similar fashion, we prove a classification result for additive amplitudes, i.e. amplitudes that behave additively under short exact sequences. They turn out to be in one-to-one correspondence with respect to functions on intervals which (amongst other properties) are additive under disjoint union, so-called \textit{contents} (\cref{thm:classification_of_additive_amp}). The classification result may be read as: Every additive amplitude on (sufficiently constructible) persistence modules is given by integrating the Hilbert function with respect to a specific choice of content. 

We then proceed to study distances associated with amplitudes on categories of persistence modules. 
In general, such distances are extended pseudometrics on the class of isomorphism classes.
In other words, it is generally not clear that two persistence modules of distance $0$ are isomorphic. 
This holds for the interleaving distance and finitely presented modules\cite{Lesnick2015}. 
In a similar spirit, a result for convolution distances of sheaves was recently shown in \cite{petit2021property}.
In our work, we show that in the general framework of amplitudes on sufficiently constructible persistence modules the associated pseudometric is a metric exactly when it is induced by an amplitude whose value is $0$ only for zero objects (see Section \ref{S:discriminativity}).

\section{Amplitudes on abelian categories} \label{S:amplitudes}

We begin with the definition of amplitudes, which are real-valued maps that behave well with respect to the structure of abelian categories.

\begin{definition}\label{D:amplitude}
	For an abelian category $\A$, a class function ${\alpha\colon \ob\A \to [0,\infty]}$ is called an \define{amplitude} if $\alpha(0) = 0$ and for every short exact sequence
	\begin{align*}
	0 \to A \to B \to C \to 0
	\end{align*} 
	in $\A$ the following inequalities hold:
	\begin{alignat}{2}\label{eq_monotonicity}
	& \text{(Monotonicity)} \qquad  &&
    \begin{array}{l}
	\alpha(A) \leq \alpha(B) \\
	\alpha(C) \leq \alpha(B)
	\end{array}
	\, ,
	\\
	& \text{(Subadditivity)}\qquad &&
    \begin{array}{l}\label{eq_subadditivity}
	\alpha(B) \leq \alpha(A) + \alpha(C) 
	\end{array}
	\, .
	\end{alignat}
	Moreover, if (\ref{eq_subadditivity}) is an equality, the amplitude is said to be \define{additive}, and if $\alpha(A)<\infty$ for all $A$ in $\ob\A$, then the amplitude is said to be \define{finite}.
\end{definition}  

We note that, as a direct consequence of the monotonicity property, an amplitude is an isomorphism invariant. 

\begin{example}\label{example:non-addtive-amplitude-ch}
(Max degree)
Consider the category $\ch$ of non-negatively-graded chain complexes over an abelian category, and chain maps between them. 
Define an amplitude $\delta$ on $\ch$ as
\[
\delta(C)\coloneqq \max \{n\in\mathbb{N} \ \vert \ C_{n}\neq 0\}
\]
for all $C$ in $\ob\ch$.
It is clear that $\delta(0)=0$.
Since a short sequence in $\ch$ is exact if and only if it is exact degreewise, we also have the properties of monotonicity (\ref{eq_monotonicity}) and subadditivity (\ref{eq_subadditivity}).
Moreover, $\delta$ is finite.
However, it is immediate to see that, in general, (\ref{eq_subadditivity}) is a strict inequality for this amplitude, and thus $\delta$ is not additive.
\end{example}

\begin{example}\label{example_p-norms}
In \cite{skraba_turner}, the authors consider $p$-norms of finitely presented 1-parameter persistence modules and use them to define distances isometric to the $p$-Wasserstein distances. 
Here, we note that $p$-norms are amplitudes.
Let $\oplus_{j\in J} \{\mathbb{I}\langle b_{j},d_{j}\rangle\}_{j\in J}$ be the finite barcode decomposition of a persistence module $M$ (see, for example, \cite{oudot} for the precise definition of these terms, which we will introduce in \cref{S_permod_finiteness}). 
The angular brackets denote that the interval can be open or closed on each endpoint.
Then the $p$-norm of $M$, for $p\in[1,\infty]$, is
\[
\rho_{p}(M)\coloneqq 
\begin{cases}
	\left(\displaystyle\sum_{j\in J} \vert d_{j}-b_{j}\vert^{p}\right)^{\frac{1}{p}} & \text{ if $p<\infty$}\\
	\max_{j\in J} \vert d_{j}-b_{j}\vert & \text{ if $p=\infty$}
\end{cases}
\, .
\]
Using the results of \cite{BauerLesnick14}, the authors of \cite{skraba_turner} show that such $p$-norms satisfy the monotonicity and subadditivity conditions (\ref{eq_monotonicity})-(\ref{eq_subadditivity}).
In particular, $\rho_1$ is an additive amplitude. 
\end{example}

\subsection{Span metrics} \label{S_path_span_metrics}

We now introduce metrics associated with amplitudes. 
A classical method to define a metric is to minimize the cost of a path between two objects.
By properties of amplitudes, the path can be shortened into a span, following the technique of \cite[Prop. 8.2 and Cor. 8.3]{SC+19}. 

\begin{definition}\label{def_cost_function}
A \define{cost function} is a function $f\colon \posrc^4\to\posrc$ satisfying the following conditions:
\begin{itemize}
    \item Monotonicity: $f(v) \leq f(w)$ whenever $v \leq w$.
    \item Subadditivity: $f(v + w) \leq f(v) + f(w)$ for any $v, w$.
    \item Behavior at zero: $f(0, 0, 0, 0) = 0$.
    \item Exchange law: $f(x_1,x_2,x_3,x_4)=f(x_3,x_2,x_1,x_4)=f(x_1,x_4,x_3,x_2)$.
\end{itemize}
\end{definition}

\begin{example}
The $1$-norm $\pnorm[1]{-}\colon \posrc^4 \to \posrc$, taking $\left(x_1,x_2,x_3,x_4\right)$ to $\sum_{i=1}^4 x_i$, is a cost function.
\end{example}

\begin{definition}\label{def_fcost}
Let $\A$ be an abelian category with amplitude $\alpha$.
Given a span ${\bf c}\colon A\xleftarrow{\ \varphi\ } C \xrightarrow{\ \psi\ } B$ in $\A$ and a cost function $f$, the \define{$f$-cost of c} is 
\[f_\alpha({\bf c})\coloneqq f(\alpha(\ker\varphi),\alpha(\coker\varphi),\alpha(\ker\psi),\alpha(\coker\psi))\, .
\]
One can similarly define the  $f$-cost of a cospan.
\end{definition}

\begin{example}
Let $\varphi\colon A \to B$ be a morphism in an abelian category $\A$ with amplitude $\alpha$. 
We can use \cref{def_fcost} to assign a cost to it:
\[
c_\alpha(\varphi)\coloneqq ||-||_{1\, \alpha}\left(B \xleftarrow{\varphi} A \rightarrow 0\right)=\alpha(\ker\varphi)+\alpha(\coker\varphi) \, .
\]
\end{example}

\begin{remark}\label{R_span_cospan}
For every cost function $f$, we have that, given a span $\mathbf{c}\colon A\xleftarrow{\ \varphi\ } C \xrightarrow{\ \psi\ } B$, there exists a cospan $\mathbf{c}'\colon A\xrightarrow{\ \psi'\ } C \xleftarrow{\ \varphi' \ } B$ with $f_\alpha(\mathbf{c}') \leq f_\alpha(\mathbf{c})$, and similarly with the roles of spans and cospans swapped. 
The proof is in \cite[Cor. 8.3]{SC+19} for $f= \pnorm[1]{-}$, but generalizes immediately to our setting.
\end{remark}

We use the $f$-cost to define a (pseudo, extended) metric:

\begin{definition}\label{def_span_metric}
Given an amplitude $\alpha$ on an abelian category $\A$ and a cost function, the \define{span metric associated to $f_\alpha$} is:
\[
\dist_{f_\alpha}\left(A,B\right)=
\inf\Big\{f_\alpha({\bf c}) \mid {\bf c} \colon A\xleftarrow{\ \varphi\ } C \xrightarrow{\ \psi\ } B \Big\}\, ,
\]
with the convention that $\inf\emptyset = \infty$.
When $f = \pnorm[1]{-}$, we simply say \define{span metric associated to $\alpha$}, denoted by $\dist_{\alpha}$.
\end{definition}

\begin{proposition}\label{P_f_span_metric}
Let $\A$ be an abelian category with amplitude $\alpha$, and let $f$ be a cost function.
Then the assignment $d_{f_\alpha}$ is a pseudometric on $\ob \A$.
\end{proposition}

\begin{proof}
It is clear that $\dist_{f_\alpha}(A,A)=0$ and that $\dist_{f_\alpha}(A,B)=\dist_{f_\alpha}(B,A)$ for every $A,B$ in $\ob\A$. 
For the triangle inequality, consider two spans ${\bf c_1}\colon A\xleftarrow{\ \varphi_1\ } H_1 \xrightarrow{\ \psi_1\ } B$ and  ${\bf c_2}\colon B\xleftarrow{\ \varphi_2\ } H_2 \xrightarrow{\ \psi_2\ } C$. 
By Remark \ref{R_span_cospan} applied to the cospan $H_1 \xrightarrow{\ \psi_1\ } B \xleftarrow{\ \varphi_2\ } H_2$, we obtain a span 
\[
{\bf c_3}\colon A\xleftarrow{\ \varphi_1\ } H_1 \xleftarrow{\ \varphi_3\ }H_3\xrightarrow{\ \psi_3\ } H_2 \xrightarrow{\ \psi_2\ } C.
\]
Thus, it is left to show that $f_\alpha({\bf c_3})\leq f_\alpha({\bf c_1})+f_\alpha({\bf c_2})$. 
This follows by the argument presented in \cite[Prop. 8.2]{SC+19} and the properties of $f$.
\end{proof}

\begin{remark}
This result is not true in the more general case of weight functions \cite{BSS18} and therefore motivates the use of amplitudes or noise systems.
Indeed, one of the motivations for the introduction of the axioms of noise systems \cite{SC+19} was to allow for studying distances using only (co)spans instead of arbitrary zigzags.
\end{remark}
\begin{example}\label{e:path_metrics_intervals}
An interval module is an object in $\text{Fun}\left(\mathbb{R}, \Vect\right)$ that assumes value $\Field$ over an interval of $\mathbb{R}$ of the form $[a,b)$, with the identity as a structure map, and $0$ otherwise.
Consider two interval modules $\mathbb{I}[a, a + r)$ and $\mathbb{I}[b, b + r)$, for $r>0\in\mathbb{R}$ and $a,b\in\mathbb{R}$ with $a+r<b$, and $\alpha = \rho_p$. 
We have $\dist_{\rho_{p}}(\mathbb{I}[a, a + r),\mathbb{I}[b, b+r))=2r$, and taking $f=\max$, $\dist_{f_{\rho_{p}}}(\mathbb{I}[a, a + r),\mathbb{I}[b, b+r))=r$.
\end{example}

\begin{lemma}\label{lemma:amplitude_Lipschitz}
Let $\A$ be an abelian category with amplitude $\alpha$. 
Then, for any $A,B$ in $\ob\A$, the following inequality holds:
\[
|\alpha(A) - \alpha(B)| \leq \dist_{\alpha}(A,B).
\] 
\end{lemma}

In other words, $\alpha$ is $1$-Lipschitz with respect to $\dist_{\alpha}$. 

\begin{proof}
By the triangle inequality and by definition of $\dist_\alpha$, it suffices to show that whenever there is a morphism $\varphi\colon A \to B$ with $c_{\alpha}(\varphi) \leq \varepsilon$, then 
\[ 
|\alpha(A) - \alpha(B)| \leq \varepsilon \, .
\]
Again by the triangle inequality and epi-mono factorization, we may restrict to the case where $\varphi$ is either an epi- or monomorphism. 
If $\varphi$ is a mono, we then have a short exact sequence $ 0 \to A \to B \to \coker{\varphi} \to 0$ with $\alpha(\coker(\varphi)) \leq \varepsilon$. Hence, by monotonicity and subadditivity of amplitudes follows that,
\[ 
\alpha(A) \leq \alpha(B) \leq \alpha(A) + \varepsilon \, ,
\] 
showing the required inequality. 
The case of $\varphi$ an epimorphism follows dually. 
\end{proof}

\begin{lemma}\label{L:char ampl metr}
Let $\A$ be an abelian category and $\alpha$ an amplitude on $\A$. 
Then, for each $A$ in $\ob\A$,
\[
\alpha(A) = \dist_{\alpha}(A,0) \, .
\] 
\end{lemma}

\begin{proof}
We observe that $\alpha(A) = c_{\alpha}(A\to 0)$, and thus $\dist_{\alpha}(A,0) \leq \alpha(A)$. 

For the converse inequality, consider a cospan $A\overset{\varphi}{\longrightarrow} B \longleftarrow 0$.
It suffices to show that $c_\alpha(A \to 0) \leq c_\alpha(\varphi) + c_\alpha(0 \to B)$ to obtain the missing inequality by passing to the infimum. 
Since $0 = 0 \circ \varphi $, we have 
\begin{equation*}
c_\alpha(A \to 0) \leq c_\alpha(\varphi) + c_\alpha(B \to 0) = c_\alpha(\varphi) + c_\alpha(0 \to B)\, .  \qedhere
\end{equation*}
\end{proof}

In data analysis, stability is a crucial property as it guarantees the robustness of the analysis: the extracted information (aka, the amplitude) does not vary excessively if the input data is only slightly perturbed. 
In more detail, we want to have that the span metric associated with a certain cost function of an amplitude is bounded by some (meaningful) metric on the input. 
This property is in general difficult to prove. 
To ease the process, we now present an elementary fact relating the stability of exact functors to span metrics. 
Using this technique, one can then prove a stability result for a certain span metric and then transfer it to other span metrics.

\begin{proposition}\label{prop:exact_ampl_con}
Let $g\colon \posrc \to \posrc$ be a monotonous continuous function.
Let $F\colon (\A,\alpha)\to (\A',\alpha')$ be an exact functor that satisfies $\alpha'(F(A))\leq g(\alpha(A))$ for all $A\in \ob\A$.
Then we have
\[ 
\dist_{\alpha'}(F(A),F(B))\leq 4 g(\dist_{\alpha}(A,B)) 
\]
for all $A,B\in \ob\A$. 
In particular,  $F$ induces a continuous function $\ob \A \to \ob \A'$ with respect to the topologies induced by $\dist_{\alpha}$ and $\dist_{\alpha'}$.
\end{proposition}
There is also the following immediate fact concerning changes in cost-processing function.

\begin{lemma}\label{composition_leq4}
Let $\A$ be an abelian category with amplitude $\alpha$, $f,h$ cost functions, and $g$ a monotonous function, continuous from above, such that $h\leq g \circ f$.
Then
\[
\dist_{h_\alpha}(A, B) \leq g(\dist_{f_\alpha}(A, B))
\]
for all $A, B \in \ob \A$. 
\end{lemma}

\begin{proof}[Proof of \cref{prop:exact_ampl_con}]
Let $f = \pnorm[1]{-}$.
Consider a (co)span ${{\bf c}\colon A\xleftrightarrow{\ \varphi\ } C \xleftrightarrow{\ \psi\ } B}$. 
As above, write $\alpha_1 = \alpha(\ker\varphi)$, $\alpha_2 = \alpha(\coker\varphi)$, $\alpha_3 = \alpha(\ker\psi)$, and $\alpha_4 = \alpha(\coker\psi)$, and proceed analogously for $F(\varphi), F(\psi)$, using $\alpha'_i$.
In particular, using the exactness of $F$, we have $\alpha'_i \leq g(\alpha_i)$.
It follows that
\begin{align*}
    \dist_{f_\alpha}(F(A), F(B)) & \leq f_{\alpha'}(F({\bf c})) \leq \sum_{i=1}^4 g(\alpha_i) \leq 4 \max_i g(\alpha_i) = 4 g\left(\max_i \alpha_i\right) 
    \\
    & \leq 4 g\left(\sum_{i=1}^4 \alpha_i \right) = 4 g( f_\alpha({\bf c})) \, .
\end{align*}
Now we take the infimum over all ${\bf c}$ and, using the fact that $4g$ commutes with infima, by the continuity assumption, we conclude the proof.
\end{proof}

\subsection{Noise systems and amplitudes}\label{S:conn noise}

The idea of amplitudes is closely related to that of a noise system, introduced in \cite{SC+19}. 
While the concept of amplitude arises from the aim to capture a notion of size for objects in an abelian category, noise systems were introduced to define stable approximations of persistence modules, and thus a noise system can be seen as a way to hierarchically organize persistence modules according to how small they are. 
Here we rephrase the definition of noise systems, introduced in \cite{SC+19} for tame persistence modules $\mathbb{Q}^n\to \Vect$, for arbitrary abelian categories and show that they are related to amplitudes by an equivalence between appropriately defined categories.

\begin{definition}\label{def_noise_system} 
Let $\A$ be an abelian category and $\posr$ the poset of non-negative reals.
A \define{noise system} in $\A$ is a collection $\ns=\{\ns_{\eps}\}_{\eps\in \posr}$ of collections of objects of $\A$, indexed by non-negative reals $\eps$, such that:
\begin{itemize}
\item[(i)] the zero object belongs to $\ns_{\eps}$ for any $\eps$;
\item[(ii)] if $0\leq \tau<\eps$, then $\ns_{\tau}\subseteq \ns_{\eps}$;
\item[(iii)] if $0\to F\to G\to H\to 0$ is an exact sequence in $\A$, then
\begin{itemize}
\item if $G$ is in $\ns_{\eps}$, then so are $F$ and $H$ (subobject property);
\item if $F$ is in $\ns_{\eps}$ and $H$ is in $\ns_{\tau}$, then $G$ is in $\ns_{\eps+\tau}$ (additivity property).
\end{itemize}
\end{itemize}
\end{definition}

In \cite{SC+19}, the authors provide several examples of noise systems. 
Among these examples, they discuss the property for a noise system of being closed under direct sum. 
Given a noise system $\ns$ on an abelian category $\A$, $\ns$ is closed under direct sum if, for all $\eps \in \mathbb{R}$ and for any pair of objects $M$, $N$ in $\ns_{\eps}$, then $M\oplus N$ is also in $\ns_{\eps}$. 
Later works \cite{oliver,henri} describe a machinery called \define{contour} that generates noise systems that are in particular closed under direct sums.
Since every noise system can be seen as an amplitude (see \cref{prop_noise_to_weight}), the theory described in \cite{oliver, henri} can be used to generate amplitudes that on direct sums take as value the maximum of the evaluations on single elements. We call such amplitudes quasi-simple, and classify them in the applied topology frameworks in \cref{SS:lower bound}.

\begin{definition}\label{def:shift_amplitude}
Let $C$ be a cone in $\mathbb{R}^{n}$, i.e.\ the set of all linear combinations with non-negative coefficients of a finite collection of non-negative elements in $\mathbb{R}^{n}$, and $||-||$ a norm on $\mathbb{R}^{n}$. 
Then the shift amplitude $\shift$ on the functor category $\text{Fun}\left(\mathbb{R}^n,\Vect\right)$ is defined as follows: 
for any $M$ in $\ob\text{Fun}\left(\mathbb{R}^n,\Vect\right)$,    
\[
    \shift(M)\coloneqq \inf\{ \ \pnorm[]{c} \ \vert \ c\in C \text{ and } M(t\leq t+ c) = 0 \text{ for all } t\in\mathbb{R}^{n}\}\, .
\]
The fact that this indeed is an amplitude follows from the properties of a noise system and the definition of standard noise in the direction of a cone in \cite[Par. 6.2]{SC+19}.

If the cone is generated by one element, then we get a simpler description of $\shift$, which also motivates the name.
Let $c\in C$ be the only vector in $C$ with $||c||=1$.
Then, for any $M$ in $\ob\text{Fun}\left(\mathbb{R}^n,\Vect\right)$, 
\[
    \shift(M)\coloneqq \inf\{\eps \in \mathbb{R} \ \vert \ M(t\leq t+ \eps c) = 0\} \, .
\]
In what follows, we often refer to the shift amplitude along a cone generated by one element as \define{shift along a vector} $v$.
Note that the shift amplitude $\shift$ does not need to be finite, in general. 
To see this it is enough to consider a module that persists forever in the direction of the cone. 
\end{definition}

We now study in detail the relation between amplitudes and noise systems. 
In particular, we will show that noise systems and amplitudes are the same by way of an adjunction.

\begin{definition} \label{def_noise_cat}
Let $(\A, \ns)$ and $ (\A', \ns')$ be abelian categories with noise systems. 
An additive functors $F\colon(\A, \ns) \to (\A', \ns')$ is \define{noise-bounding with constant $K \in (0, \infty)$} if there exists a non-negative real number $K\geq 0$ such that, for any $A$ in $\ob \A$, if $A\in \ns_\eps$, then $F(A)\in \ns_{K\eps}$.
The \define{category of noise systems} $\Noise$ is the category whose objects are pairs $(\A, \ns)$ consisting of an abelian category $\A$ and a noise system $\ns$, and whose morphisms are noise-bounding functors\footnote{As we are dealing with categories of large categories here, we do not expect $\Noise$ to be locally small.}.
\end{definition}

\begin{definition} \label{def_amp_cat}
Let $(\A, \alpha)$ and $(\A', \alpha')$ be abelian categories with amplitudes.
An additive functor $F\colon (\A, \alpha) \to (\A', \alpha')$ is \define{amplitude-bounding with constant $K \in (0, \infty)$} if $\alpha'(FA) \leq K \alpha(A)$ for all objects $A$ in $\ob\A$. 
The \define{category of amplitudes} $\Amp$ is the category whose objects are pairs $(\A, \alpha)$ of abelian categories and amplitudes, and whose morphisms are amplitude-bounding functors. 
\end{definition}

For our applications to stability, we will be especially interested in the case in that $\A$ is an abelian category with two amplitudes $\alpha$ and $\alpha'$, and we consider identity functor $\id_\A \colon (\A, \alpha) \to (\A, \alpha')$. 
The name is a slight abuse of notation, as this functor behaves as the identity only on part of the structure, but the symbol is more precise: the subscript contains only the category and not the amplitude.
In this context, the following is worth noting:
\begin{itemize}
    \item $\id_\A$ does not need be a morphism in $\Amp$.
    \item If $\id_\A$ is a morphism in $\Amp$, it does not need be invertible.
    \item If $\id_\A$ is invertible, it is not necessarily the case that $\alpha = \alpha'$. Rather, $\alpha'$ allows us to infer upper and lower bounds on $\alpha$, and vice versa.
\end{itemize}

We now establish the relationship between noise systems and amplitudes. 
Essentially, this relationship comes down to characterizing a norm on a vector space by the set of $\eps$-balls around the origin. 

\begin{proposition}\label{prop_noise_to_weight}
Given an abelian category $\A$ with a noise system $\ns = \{\ns_\eps\}_{\eps\geq 0}$, the function $\alpha_\ns$ on $\ob \A$ defined by
\[
\alpha_\ns(A) \coloneqq
\inf\left\{\eps \ \vert \ A\in \ns_\eps\right\},
\]
with the convention that $\inf\emptyset= \infty$, is an amplitude on $\A$.
This construction gives rise to a functor $\NA\colon\Noise \to \Amp$ defined by sending:
\begin{itemize}
    \item An object $(\A, \ns)$ of $\Noise$ to the object $(\A, \alpha_\ns)$ of $\Amp$, and
    \item A noise-bounding functor $F\colon(\A, \ns) \to (\A', \ns')$ to an amplitude-bounding functor $F\colon(\A, \alpha_\ns) \to (\A', \alpha_{\ns'})$.
\end{itemize}

\end{proposition}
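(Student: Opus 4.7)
The plan is to verify the three axioms of an amplitude for $\alpha_\ns$, and then to check that the construction is functorial. I will exploit the direct correspondence between the indexing parameter $\varepsilon$ of a noise system and the value of the induced real-valued invariant: the noise axioms (i)--(iii) are essentially reformulations of the amplitude axioms at the level of ``sublevel sets.''

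First, I will establish that $\alpha_\ns$ is an amplitude. The condition $\alpha_\ns(0) = 0$ is immediate from axiom (i), since $0 \in \ns_\varepsilon$ for every $\varepsilon \geq 0$, so the infimum of $[0,\infty)$ is $0$. For monotonicity, given a short exact sequence $0 \to A \to B \to C \to 0$ and any $\varepsilon$ with $B \in \ns_\varepsilon$, the subobject clause of (iii) yields $A, C \in \ns_\varepsilon$. Hence $\{\varepsilon : B \in \ns_\varepsilon\} \subseteq \{\varepsilon : A \in \ns_\varepsilon\}$ and similarly for $C$, and taking infima reverses the inclusion. For subadditivity, fix any $\varepsilon' > \alpha_\ns(A)$ and $\tau' > \alpha_\ns(C)$. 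By definition of the infimum there exist $\varepsilon \leq \varepsilon'$ with $A \in \ns_\varepsilon$ and $\tau \leq \tau'$ with $C \in \ns_\tau$; axiom (ii) promotes these to $A \in \ns_{\varepsilon'}$ and $C \in \ns_{\tau'}$, and then the additivity clause of (iii) gives $B \in \ns_{\varepsilon' + \tau'}$, so $\alpha_\ns(B) \leq \varepsilon' + \tau'$. Letting $\varepsilon' \downarrow \alpha_\ns(A)$ and $\tau' \downarrow \alpha_\ns(C)$ yields subadditivity. The only subtle point here is that the infimum may not be attained, which is precisely why I first enlarge $\varepsilon$ and $\tau$ using axiom (ii) before applying additivity.

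Next, I will check that noise-bounding functors induce amplitude-bounding functors with the same constant. Suppose $F\colon (\A, \ns) \to (\A', \ns')$ is noise-bounding with constant $K$, so that $A \in \ns_\varepsilon$ implies $F(A) \in \ns'_{K\varepsilon}$. Then for any $A \in \ob\A$ and any $\varepsilon > \alpha_\ns(A)$, an application of axiom (ii) as above gives $A \in \ns_\varepsilon$, hence $F(A) \in \ns'_{K\varepsilon}$, and thus $\alpha_{\ns'}(F(A)) \leq K\varepsilon$. Taking the infimum over such $\varepsilon$ gives $\alpha_{\ns'}(F(A)) \leq K\alpha_\ns(A)$, so $F$ is amplitude-bounding with constant $K$.

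Finally, functoriality of $\NA$ is essentially bookkeeping: $\NA$ acts as the identity on underlying additive functors, the identity on $(\A, \ns)$ is clearly noise-bounding with constant $1$ and maps to the identity on $(\A, \alpha_\ns)$, and a composite of noise-bounding functors with constants $K_1, K_2$ is noise-bounding with constant $K_1 K_2$, matching the composition behaviour for amplitude-bounding functors. The main (mild) obstacle throughout is keeping track of infima that are not necessarily attained; this is handled uniformly by using axiom (ii) to replace $\alpha_\ns(A)$ with an arbitrary strict upper bound and then passing to the limit at the end.
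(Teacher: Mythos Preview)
Your proof is correct and carries out in full the routine verification that the paper leaves to the reader (the paper's proof consists of the single sentence ``All of the verifications here are straightforward''). Your handling of the infimum via axiom (ii) is exactly the care needed, and the functoriality check is fine.
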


\begin{proof}
All of the verifications are straightforward.
\end{proof}

Having constructed amplitudes from noise systems in \cref{prop_noise_to_weight}, we are led to consider the converse:

\begin{definition} \label{def_adjoint_noise_system}
Let $\alpha$ be an amplitude on an abelian category $\A$.
The \define{noise system associated to} $\alpha$, $\ns(\alpha)$, is defined by
\begin{equation*}
    A \in \ns(\alpha)_\eps \Longleftrightarrow \alpha(A) \leq \eps\, ,
\end{equation*}
for all $\eps \in \mathbb R_{\geq 0}$.
This construction induces a functor $\AN\colon\Amp \to \Noise$ as in \cref{prop_noise_to_weight}. 
\end{definition}

\begin{proposition}\label{prop:equ_of_cat}
The functors $\AN\colon \Amp \leftrightarrow \Noise\colon \NA$ induce an equivalence of categories. 
The natural isomorphisms of the equivalence are all given by the identity on the underlying abelian categories and are noise (amplitude) bounding for any $K >1$.
\end{proposition}

\begin{proof}
Both natural isomorphisms $\NA \circ \AN \cong \id_{\Amp}$ and $\AN \circ \NA = \id_{\Noise}$ are induced by the identity. 
The former is easily seen to fulfill $\NA(\AN(\alpha)) = \alpha$ directly. 
For the latter, note first that $\ns _{\eps} \subset \AN(\NA(\ns))_{\eps}$. 
Hence, the identity in one direction is noise bounding with constant $1$. 
Conversely, for any $A \in \AN ( \NA (\ns))_{\eps}$, we have $A \in \ns_{K \eps}$ for any $K>1$. 
Hence, the converse direction is noise bounding with constant $K >1$.
\end{proof}

In particular, we have that exact amplitude-bounding functors induce Lipschitz functions between metric spaces:

\begin{proposition}\label{prop:Weaker_Ex_Cond} 
Let $F\colon (\A,\alpha)\to (\A',\alpha')$ be an exact  additive amplitude-bounding functor with constant $K$.
Then the induced map $F\colon (\ob \A, \dist_\alpha) \to (\ob \A', \dist_\alpha')$ is $K$-Lipschitz.
\end{proposition}

\begin{proof} 
Follows directly from the definitions.
\end{proof}

As a partial converse, we can show that an additive functor that induces Lipschitz maps on metric spaces is necessarily amplitude-bounding:

\begin{proposition}\label{P:lipschitz functors}
Let $F\colon (\A,\alpha)\to (\A',\alpha')$ be an additive functor such that the induced map $F\colon (\ob \A, \dist_\alpha) \to (\ob \A', \dist_{\alpha'})$ is $K$-Lipschitz.
Then $F$ is amplitude-bounding with constant $K$.
\end{proposition}

\begin{proof}
By \cref{L:char ampl metr} we know that $\alpha'(F(A))=\dist_{\alpha'}(F(A),0)$. 
Since $F$ is additive, we have that $F0\cong0$. 
Thus, we obtain $\alpha'(F(A))=\dist_{\alpha'}(F(A),0)\leq K \dist_{\alpha}(A,0)=\alpha(A)$.
\end{proof}

\subsection{Distances from noise systems vs. amplitudes}\label{S:path vs noise}

In this section, we relate distances obtained from noise systems and amplitudes, respectively.  
Noise systems and amplitudes induce metrics on the underlying abelian categories (see \cref{def_span_metric}), and we show that passing from a noise system to its corresponding amplitude does not affect the resulting metric.
This all assembles into a commutative diagram of functors between various categories of noise systems, amplitudes, and pseudometric spaces.
\medskip

We begin by defining the relevant categories.

\begin{definition} \label{def_pseudom}
Let $\Pseudom$ be the \define{category of pseudometric spaces} and Lipschitz maps, i.e.\ $K$-Lipschitz maps for some $K \geq 0$.
\end{definition}

Here, we use the usual definition of ``pseudometric space'', except that we do not require the underlying objects of our pseudometric spaces to be sets, and we allow pseudometrics to take values in $\posrc$.
\medskip

To ensure that the functors of interest between abelian categories induce Lipschitz maps between the corresponding pseudometric spaces, we restrict our attention to exact functors, as in the following definition.

\begin{definition} \label{def_exact_cats}
We denote by $\Noise^{ex}$ be the subcategory of $\Noise$ where morphisms are exact noise-bounding functors, and by $\Amp^{ex}$ be the subcategory of $\Amp$ where morphisms are exact amplitude-bounding functors. 
\end{definition}

We are now ready to investigate the functoriality of the standard metric constructions. 
We first recall the definition of $\varepsilon$-closeness from \cite[Def. 8.4]{SC+19}: 
two objects $A$, $B$ in an abelian category $\A$ are $\eps$-close if there exist two maps $\varphi\colon A\to H$ and $\psi\colon H \to B$ such that $\ker(\varphi)\in\ns_{\eps_1}$, $\coker(\varphi)\in\ns_{\eps_2}$, $\ker(\psi)\in\ns_{\eps_3}$, and $\coker(\psi)\in\ns_{\eps_4}$, and $\eps_1+\eps_2+\eps_3+\eps_4 \leq \eps$.

\begin{proposition} \label{prop_noise_to_pseudom}
There exists a functor $\Noise^{ex} \to \Pseudom$
\begin{itemize}
    \item[(i)] Sending a pair $(\A, \ns)$ to the pseudometric space $(\ob \A, \dist_\ns)$, where $\dist_\ns$ is the metric defined by 
    \[
    \dist_\ns(A,B) = \inf \{ \varepsilon \,|\, \textrm{$A$ and $B$ are $\varepsilon$-close} \},
    \]
    where as usual $\inf \emptyset = \infty$;
    \item[(ii)] Sending an exact noise-bounding functor $F\colon(\A, \ns) \to (\A', \ns')$ to the map on objects $F\colon(\ob \A, \dist_\ns) \to (\ob \A', \dist_{\ns'})$.
\end{itemize}
Furthermore, if $F\colon(\A, \ns) \to (\A', \ns')$ is exact and noise-bounding with constant $K$, then the map on objects is $K$-Lipschitz.
\end{proposition}

\begin{proof}
The fact that $\dist_\ns$ is a pseudometric is proved, in a special case, in \cite[Prop.\ 8.7]{SC+19} and their arguments generalize immediately to the case we are considering.
Hence, the only nontrivial part of the proposition is the last claim, since this implies the functor $\Noise^{ex} \to \Pseudom$ is well-defined on morphisms.
So, suppose we are given an exact functor $F\colon(\A, \ns) \to (\A', \ns')$ which is noise-bounding with constant $K$.
Furthermore, suppose $\dist_\ns(A,B) < \eps$ for some $A, B\in \ob \A$.
Then we can choose a span $A\overset{f_0}{\longleftarrow} C \overset{f_1}{\longrightarrow} B$ with $\ker f_0 \in \ns_{\eps_1}$, $\ker f_1 \in \ns_{\eps_2}$, $\coker f_0 \in \ns_{\eps_3}$, and $\coker f_1 \in \ns_{\eps_4}$, for some $\eps_i$ satisfying $\sum_{i=1}^4 \eps_i < \eps$.
Since $F$ is noise bounding and exact, the span
$FA\xleftarrow{Ff_0} FC \xrightarrow{Ff_1} FB$ serves as a witness to the inequality $\dist_{\ns'}(FA, FB) < K\eps$.
Because this holds for any $\eps > \dist_\ns(A,B)$, we see that $\dist_{\ns'}(FA, FB) \leq K \dist_\ns(A,B)$, i.e.\ $F$ is $K$-Lipschitz.
\end{proof}

\begin{corollary}
\label{prop_weight_to_pseudom}
There exists a functor $\Amp^{ex} \to \Pseudom$
\begin{itemize}
    \item[(i)] Sending an object $(\A, \alpha)$ to the pseudometric space $(\ob \A, \dist_\alpha)$, where $\dist_\alpha$ is as defined in \cref{def_span_metric};
    \item[(ii)] Sending an exact amplitude-bounding functor $F\colon(\A,\alpha)\to(\A',\alpha')$ to the map on objects $F\colon(\ob\A,\dist_\alpha) \to (\ob\A',\dist_{\alpha'})$.
\end{itemize} 
Furthermore, if $F\colon(\A, \alpha) \to (\A', \alpha')$ is exact and amplitude bounding with constant $K$, then $F\colon(\ob \A, \dist_\alpha) \to (\ob \A', \dist_{\alpha'})$ is $K$-Lipschitz. 
\end{corollary}

\begin{proof}
By \cref{P_f_span_metric}, such a functor is uniquely given by the composition $\Amp^{ex} \xrightarrow{\AN} \Noise^{ex} \to \Pseudom$.
\end{proof}

Applying \cref{prop:equ_of_cat}, we obtain:

\begin{corollary} \label{cor_commutative diagram}
The functors of \cref{prop_noise_to_pseudom}, \cref{prop_weight_to_pseudom}, \cref{prop_noise_to_weight}, and \cref{def_adjoint_noise_system} assemble into on-the-nose commutative triangles
\[
\begin{tikzcd}
\Amp^{ex} \arrow[rd] \arrow[rr, "\AN" ] & & \Noise^{ex} \arrow[ld] \\
 & \Pseudom &
\end{tikzcd}
,
\begin{tikzcd}
\Noise^{ex} \ar[dr] \ar[rr, "\NA"] & & \Amp^{ex} \ar[dl] \\
& \Pseudom &
\end{tikzcd}
.
\]
\end{corollary}

We end this section with a remark on another class of metrics for homological algebra setting which was defined in \cite{biran2021triangulation}.

\begin{remark}
    In \cite{biran2021triangulation} several notions of metrics for triangulated (persistence) categories were constructed.
    These rely on the additional data of specifying a class of objects $\mathcal{F}$. 
    Instead of assigning weights to morphisms, these distances assign weights to triangles. 
    In particular, the metrics $\underline{d}_{\mathcal F}$ of \cite[Rem. 2.0.5]{biran2021triangulation} may seem conceptually very close to amplitude distances.
    Roughly, $\underline{d}_{\mathcal F}$ measures how many exact triangles with lower tip in $\mathcal F$ it takes to transform one object into another, and weights this value by the weight of the triangles. At first glance, this makes amplitude distances appear to be a special case of the distances in \cite{biran2021triangulation}. 
    Given an abelian category $\mathcal{A}$ with amplitude $\alpha$, the obvious route of comparison would be to consider the (bounded) derived category of $\mathcal{A}$. 
    One may then take $\mathcal{F} = \{ C \mid \sum_{i \in \mathbb Z} \alpha(H_i(C)) < \infty \}$
    and assign to each exact triangle $T$
    \begin{center}
        \begin{tikzcd}
            A \arrow[rr] & & B \arrow[ld] \\
            & C \arrow[lu] &
        \end{tikzcd}
    \end{center}
    the weight $\omega(T) = \sum_{i \in \mathbb Z} \alpha(H_i(C)).$ \\
    We note, however, that these types of weights do not fulfill the axioms of \cite[Def. 2.0.1]{biran2021triangulation}. In particular, they fail normalization.  
    Furthermore, the definition of $\underline{d}_F$, in \cite{biran2021triangulation} requires all triangles in a sequence to be oriented in the same way, and then symmetrize the metric by taking a maximum. 
    This roughly corresponds to setting \begin{align*}
        \dist(A,B) = \max &
        \Bigl\{
        \inf \{ \alpha(\ker f) + \alpha( \coker f) \mid f \in \Hom_{\mathcal A}(A,B)\},
        \\
        & 
        \inf \{ \alpha(\ker f) + \alpha( \coker f )\mid f \in \Hom_{\mathcal A}(B,A)\}  
        \Bigl\},
    \end{align*}
    which differs from what we defined. For instance, in the case of $1$-parameter persistence modules and the shift amplitude, this value will be infinite for any two interval modules that are not isomorphic. 
    
    Additionally, in \cite[Sec. 4]{biran2021triangulation} the authors define metrics that take persistence into account by adding an additional persistence structure to the triangulated category and generalizing from exact triangles to triangles which are exact up to an $\varepsilon$-isomorphism. However, in these scenarios, they only weigh such almost exact triangles by the necessary amount of shifting. 
    In particular, there seems to be no way to represent arbitrary amplitude distances in this framework, and therefore, although their approach seems close to ours, it is different in practice.
\end{remark}

\section{Persistence modules and finiteness conditions}\label{S_permod_finiteness}

In this section, we first cover some finiteness notions for multiparameter persistence modules and then we discuss some properties of multiparameter persistence modules on intervals.

\begin{notation}
We denote by $\Vect$ the abelian category of finite-dimensional vector spaces over some fixed field $\mathbb{F}$. 
We use $\Pos$ to denote an arbitrary poset, and by abuse of notation, we denote by $\Pos$ also the canonical posetal category associated with $\Pos$.
\end{notation}

\begin{definition}\label{D:per mod}
A \define{persistence module} is a functor $\Pos\to \Vect$. 
When $\Pos=P^n$ for $P$ a totally ordered set and $n\geq 1$  a natural number, we also call such a functor
an \define{$n$-parameter persistence module}. 
Furthermore, when $n\geq 2$, we alternatively call such a module a \define{multiparameter persistence module}.
\end{definition}

Persistence modules are one of the main objects of study in TDA: by computing simplicial homology with coefficients in a field of an $n$-parameter family of simplicial complexes, one obtains for each homology degree an $n$-parameter persistence module.

\begin{remark}\label{rem:module_notation}
We will constantly make use of the fact that persistence modules over $\Pos =\mathbb Z^n$, $\mathbb R^n$, 
or more generally any monoidal poset, admit an equivalent formulation as algebraic objects. 
In Definition \ref{D:per mod} we define persistence modules  as functors $\Pos\to \Vect$. 
The category with objects such as functors, and morphisms given by natural transformations between them, can be easily seen to be isomorphic to the category with objects $\Pos$-graded modules over the monoidal ring $\Field[\Pos_{\geq 0}]$, and morphisms given by with homogeneous module homomorphisms. 
Using the latter perspective, we will frequently denote the application of $M(u \leq v)$ to an element $m \in M_u$ in the form 
\[ 
x^{u-v}m \coloneqq M(u \leq v)m \, ,
\]
for $M \in \Vect^{\Pos}$, $u\leq v \in Q$.
\end{remark}

To balance computability and generality, we focus on persistence modules that admit a \textit{finite encoding} \cite{Miller-hom}. 
Roughly, a finitely encoded module is a module that can be described using a finite poset (\cref{D:finite_enc}). 
However, this level of generality is not entirely suitable to our setting since the category of finitely encoded persistence modules is not abelian \cite[Ex. 4.25]{Miller-hom}. 
Instead, as a consequence of \cite[Thm 1.1]{Lukas}, we restore abelianity by only allowing for encodings of certain types. 
Particular attention will be paid to the class of staircase modules (\cref{ex:staircase}).

\begin{figure}[h!]
\centering
\subfloat[A staircasemodule that is not finitely presented.]{%
\begin{tikzpicture}[scale=1,y=3cm, x=3cm,>=latex',font=\sffamily]
\begin{scope}
\draw[->, thick] (0,0) -- (0,1);
\draw[->, thick] (0,0) -- (1,0);
\coordinate (fe1) at (0.3,0.2);
\coordinate (fe2) at (0.8,0.2);
\coordinate (fe3) at (0.8,0.45);
\coordinate (fe4) at (0.6,0.45);
\coordinate (fe5) at (0.6,0.65);
\coordinate (fe6) at (0.3,0.65);
\draw[-, gray] (0.3,-0.03) -- (0.3,0.03);
\draw[-, gray] (0.8,-0.03) -- (0.8,0.03);
\draw[-, gray] (0.6,-0.03) -- (0.6,0.03);
\draw[-, gray] (-0.03,0.2) -- (0.03,0.2);
\draw[-, gray] (-0.03,0.45) -- (0.03,0.45);
\draw[-, gray] (-0.03,0.65) -- (0.03,0.65);
\fill[Bsugarpaper] (fe1) rectangle (fe3);
\fill[Bsugarpaper] (0.3,0.4) rectangle (fe5);
\draw[-, Bdust, line width=0.5mm] (fe2) -- (0.8,0.458);
\draw[-, Bdust, line width=0.5mm] (fe3) -- (fe4);
\draw[-, Bdust, line width=0.5mm] (0.6,0.442) -- (0.6,0.658);
\draw[-, Bdust, line width=0.5mm] (fe5) -- (fe6);
\node at (0.3,-0.06) {{\tiny 1.5}};
\node at (-0.06,0.2) {{\tiny 1}};
\node at (0.6,-0.06) {{\tiny 3}};
\node at (-0.07,0.45) {{\tiny 2.5}};
\node at (0.8,-0.06) {{\tiny 4}};
\node at (-0.06,0.65) {{\tiny 3}};
\end{scope}
\end{tikzpicture}
\label{fig:cub_enc}}
$\qquad $
\subfloat[A finitely encoded module that is not a staircase module.]{%
\begin{tikzpicture}[scale=1,y=3cm, x=3cm,>=latex',font=\sffamily]
\pgfdeclarelayer{background}
\pgfsetlayers{background,main}
\draw[->, thick] (0,0) -- (0,1);
\draw[->, thick] (0,0) -- (1,0);
\draw[-, gray] (0.2,-0.03) -- (0.2,0.03);
\draw[-, gray] (0.83,-0.03) -- (0.83,0.03);
\draw[-, gray] (-0.03,0.2) -- (0.03,0.2);
\draw[-, gray] (-0.03,0.65) -- (0.03,0.65);
\node at (0.2,-0.06) {{\tiny 1}};
\node at (-0.06,0.2) {{\tiny 1}};
\node at (0.83,-0.06) {{\tiny 4}};
\node at (-0.06,0.65) {{\tiny 3}};

\begin{scope}[blend mode=multiply]
\node (fe1) at (0.2,0.7) {};
\node (fe2) at (0.8,0.2) {};
\node (fe6) at (0.55,0.55) {};
\begin{pgfonlayer}{background}
    \fill[bend left, fill=Bsugarpaper]
    (fe2.east)
      to [out=360,in=130] (fe1.south)
      to (fe6.south)
      to (fe2.east);
\end{pgfonlayer}
\end{scope}
\end{tikzpicture}
\label{fig:fin_enc}}
\caption{Examples illustrating the difference between staircase module and finitely encoded module. 
To every point is associated a copy of the field $\mathbb{F}$ or $0$ if it is in or outside a blue region, respectively. 
The thick line denotes a closed boundary, and the lack of it an open boundary. 
The structure morphisms are given by the identity on $\mathbb{F}$ whenever possible.}
\label{F:finitess}
\end{figure}

We begin by introducing some general notation and language.

\begin{definition}\label{D:upset_downset}
An \define{upset} of a poset $\Pos$ is a subset $U\subseteq \Pos$ such that for any $u\in U$ and $q\in \Pos$ whenever $u\leq q$ then $q\in U$. 
Similarly, a \define{downset} of a poset $\Pos$ is a subset $D\subseteq \Pos$ such that for any $d\in D$ and $q\in \Pos$ whenever $d\geq q$ then $q\in D$.
An \define{interval} in $\Pos$ is the intersection of an upset and a downset of $\Pos$.
\end{definition}

Note that, according to this definition, an interval can be disconnected.

\begin{notation} For a poset $\Pos$, and $p,q \in \Pos$, we denote
\begin{enumerate}
    \item The upset given by all elements greater or equal to $q$, by $[q, \infty)$.
    \item The downset given by all elements lesser or equal to $q$, by $(-\infty, p]$.
    \item The interval given by the intersection of $[p, \infty)$ and $(- \infty, q]$ by $[p,q]$.
\end{enumerate}
For $A\subset B$ subsets of a poset $\Pos$, we write $ A \subsetl B$ to indicate that $A$ is a downset in $B$ and $A \subsetr$ to indicate that $A$ is an upset in $B$.
\end{notation}

\begin{definition}
    Recall that an \define{algebra} on a set $S$, is a subset of the power set of $S$, which contains $S$ and is closed under unions and complements.
\end{definition}

As already mentioned above, it is useful to have a general framework that specifies what class of persistence modules one allows for. 
This is taken care of by the following definition:

\begin{definition}
Let $\Pos$ be a poset. An \define{encoding structure} $\mathfrak X$ on $\Pos$ is a subset of the powerset of $\Pos$, such that
\begin{enumerate}
    \item $\mathfrak X$ is an algebra (i.e. closed under finite unions and complements, and contains $\Pos$);
    \item Every element of $\mathfrak X$ is a finite union of intervals, which are themselves contained in $\mathfrak X$;
    \item If $I \in \mathfrak X$ is an interval of $\Pos$, then there exist upsets $U,V \in \mathfrak X$, such that $I = U \cap V^c$;
\end{enumerate}
\end{definition}

\begin{example} \label{ex:staircase}Let $\mathbb T$ be a totally ordered set.
A \define{cube} in $\mathbb T^n$ is a subset of the form $I_1 \times \dots \times I_n$, where the $I_j$ are intervals in $\mathbb T$.
A finite union of cubes that are unbounded above is called a \define{staircase}. 
The set 
\[ 
\mathfrak{C} \coloneqq \{ S \subset \mathbb T^n \mid S \textnormal{ is a finite union of cubes in $\mathbb T^n$ }\} \ 
\] 
is an encoding structure, which we call the \define{staircase structure $\mathbb T^n$}, as the upsets of $\mathfrak C$ are precisely the staircases. We denote by $\cCubes$ the subalgebra of $\mathfrak{C}$ generated by the closed staircases. 
Again, $\cCubes$ is an encoding structure on $\mathbb{T}^n$. 

Furthermore, observe that the intervals of $\mathfrak C$ are all given by finite unions of cubes in $\mathbb T^n$ and that the intervals of $\cCubes$ are given by finite unions of cubes $I_1 \times \dots \times I_n$ where the $I_j$ are closed at the left and open at the right.
\end{example}

\begin{example}\label{ex:more_general_classes_of_enc_struct}
    The algebra generated by piecewise linear upsets, by semi-algebraic upsets, or alternatively by any o-minimal structure in the sense of \cite{van1998tame}, defines an encoding structure on $\mathbb R^n$. 
    Similarly, one may consider the subalgebra of the latter examples generated by only the topologically closed upsets, called the closed below encoding structures.
\end{example}

\begin{definition} 
An \define{encoding} of an object $M\in\ob\Vect^{\Pos}$ by a poset $\Pos'$ is a morphism $e\colon \Pos\to \Pos'$ together with an object $M'\in\ob\Vect^{\Pos'}$ and an isomorphism $\eta\colon M \xrightarrow{\sim} e^{\ast}M'$ between the object and the pullback of $M'$  along $e$.
Moreover, the encoding is \define{finite} if $\Pos'$ is finite and $M'$ is pointwise finite-dimensional.
Let $\mathfrak X$ be an encoding structure on $\Pos$ and $e\colon \Pos \to \Pos'$, $\eta\colon M \xrightarrow{\sim} e^{\ast}M'$ a finite encoding of $M$. 
If $e$ is such that for every upset $U \subset \Pos'$, $e^{-1}(U)$ is an element of $\mathfrak X$, or equivalently that $e^{-1}(p) \in \AlgInt$, for every $p \in \Pos'$, then we say that the encoding is \define{of class $\mathfrak{X}$} or an \define{$\mathfrak{X}$-encoding}.
\end{definition}

\begin{definition}\label{D:finite_enc}
An object $M$ in $\ob\Vect^{\Pos}$ is \define{finitely encoded} if it admits a finite encoding.
Moreover, $M$ is \define{$\mathfrak{X}$-encoded}, or \define{encoded by $\mathfrak X$}, if it admits a $\mathfrak X$-encoding. 
If $\Pos = \mathbb T^n$ for a totally ordered poset $\mathbb{T}$, and $\mathfrak X = \mathfrak C$ is the staircase structure, we say that $M$ is a \define{staircase module}.
\end{definition}

\begin{notation}
For a poset $\Pos$ and an encoding structure $\mathfrak X$ on $\Pos$ we denote by $\permodq{\Pos}$ the full subcategory of $\Vect^\Pos$ given by finitely encoded persistence modules,
and by $\permodx$ the full subcategory given by the objects that are finitely $\mathfrak X$-encoded.
Note that, when $n=1$, the categories $\permodq{\mathbb R^n}$ and $\permod$ coincide.
\end{notation}

\begin{example}\label{ex:encoding_intervals}
Let $I \subset \mathcal Q$ be an interval of an encoding structure $\mathfrak X$ on $\Pos$. 
We denote $\Field[I]$, the \define{interval module on }$I$, the module obtained as follows. 
Consider the finite poset with two elements $0$ and $1$ and the unique non-trivial relation given by $0\leq 1$. 
We denote this by $\{0\leq 1\}$, and similarly we denote by $\{0 \leq 1\}^2$ the poset given by the cartesian product $\{0,1\}^2$ with partial order $(a,b)\leq (a',b')$ if and only if $a\leq a'$ and $b\leq b'$. 

Consider the map of partially ordered sets $e\colon\Pos \to \{0 \leq 1\}^2$ induced by the two maps $e_D\colon \Pos \to \{ 0 \leq 1\}$ and $e_U\colon \Pos \to \{ 0 \leq 1\}$ defined as follows:
\begin{equation*}
    e_D(q) \coloneqq 
    \begin{cases}
      0 \ \textrm{ if } q \in D \\
      1 \ \textrm{ otherwise }
    \end{cases}\qquad
    e_U(q) \coloneqq 
    \begin{cases}
      1 \ \textrm{ if } q \in U \\
      0 \ \textrm{ otherwise }\, .
    \end{cases}
\end{equation*}
Let $M' \in \Vect^{\{0 \leq 1\}^2}$ be the unique persistence module with $M'_{(0,1)}=\Field$ and $0$ otherwise. 
Finally, define $\Field[I] = e^*M'$.
By construction, $\Field[I]$ is $\mathfrak X$-encoded. Explicitly, $\Field[I]$ is given by $\Field$ on $I$, $0$ everywhere else, and with identity as structure morphism for all relations in $I$. 
\end{example}

\begin{figure}[ht!]
\centering
\begin{tikzpicture}[scale=1,y=3cm, x=3cm,>=latex',font=\sffamily]
\draw[->, thick] (0,0) -- (0,0.75);
\draw[->, thick] (0,0) -- (0.75,0);
\draw[-, gray] (0.2,-0.03) -- (0.2,0.03);
\draw[-, gray] (0.6,-0.03) -- (0.6,0.03);
\draw[-, gray] (-0.03,0.1) -- (0.03,0.1);
\draw[-, gray] (-0.03,0.5) -- (0.03,0.5);
\node at (0.2,-0.06) {{\tiny 2}};
\node at (-0.06,0.1) {{\tiny 1}};
\node at (0.6,-0.06) {{\tiny 6}};
\node at (-0.06,0.5) {{\tiny 5}};
\begin{scope}[blend mode=multiply]
\coordinate (x1) at (0.2,0.1);
\coordinate (x2) at (0.6,0.1);
\coordinate (x3) at (0.2,0.5);
\coordinate (x4) at (0.6,0.5);
\coordinate (u3) at (0.9,0.85);
\coordinate (d3) at (-0.37,-0.22);
\fill[gray!20] (x1) rectangle (u3);
\fill[gray!20] (d3) rectangle (x4);
\fill[Bsugarpaper] (x1) rectangle (x4);
\node (b1) at (0.65,0.65) {$\bullet$};
\node (b2) at (-0.02,-0.13) {$\bullet$};
\node (b3) at (0.1,0.55) {$\bullet$};
\node (b4) at (0.45,0.2) {$\bullet$};
\node at (0.75,0.75) {$(1,1)$};
\node at (-0.2,-0.13) {$(0,0)$};
\node at (0.15,0.65) {$(1,0)$};
\node at (0.62,0.2) {$(0,1)$};
\draw[->] (b4) -- (b1);
\draw[->] (b2) -- (b4);
\draw[->] (b2) -- (b3);
\draw[->] (b3) -- (b1);
\end{scope}
\end{tikzpicture}
\caption{An example of an interval module, defined over $I=D\cap U$, where $D=\{x\in\mathbb{R}^2 \, \vert \, x\leq (6,5)\}$ and $U=\{x\in\mathbb{R}^2\, \vert \, x\geq (2,1)\}$, with the bullets representing the encoding poset $\{0\leq 1\}^2$. The sets $U$ and $D$ are depicted in gray.}
\label{fig_ex_encoding}
\end{figure}

A useful property of finitely encoded modules is that there always exists a common finite encoding.

\begin{lemma}\label{L:common_encoding}
Let $M_1,\dots, M_n$ be finitely encodable persistence modules over $\Pos$. 
Then there exist an encoding map $e\colon \Pos \to \Pos'$ which is part of an encoding for $M_1, \dots, M_n$ simultaneously. 
Furthermore, if $M_1, \dots, M_n$ are $\mathfrak X$ encodable, for some encoding structure $\mathfrak X$ on $\Pos$, then $e$ can also be taken of class $\mathfrak X$.
\end{lemma}

\begin{proof}
We only show the case $n=2$. 
The general case is entirely analogous.
Let $e_M\colon \Pos\to \mathcal{P}_M$ and $e_N\colon \Pos\to \mathcal{P}_N$ be finite encodings of $M$ and $N$, respectively, with modules $M'$ and $N'$. 
Consider the induced map
\begin{alignat}{2}
e\colon \Pos&\notag\longrightarrow \mathcal{P}_M\times \mathcal{P}_N \\
q&\notag\mapsto (e_M(q),e_N(q))\, .
\end{alignat}
Define two modules $M''$ and $N''$ as the pullback along the canonical projections $\pi_M\colon \mathcal{P}_M\times \mathcal{P}_N\to \mathcal{P}_M$, $\pi_N\colon \mathcal{P}_M\times \mathcal{P}_N\to \mathcal{P}_N$ of $M'$ and $N'$, respectively.
It follows that there exist two isomorphisms $M\cong e^\ast M''$ and $N\cong e^\ast N''$, proving the claim. 
Furthermore, the fibers of $e$ are, by construction, given by the pairwise intersections of the fibers of $e_M$ and $e_N$. 
Hence, if the latter two are of class $\mathfrak X$, so is $e$.
\end{proof}

\begin{remark}\label{rmk_encoding_disjoint_intervals}
Let $I \subset \Pos$ be an interval. Then, for any encoding map $e\colon \Pos \to \Pos'$ of $\Field[I]$, $I$ is saturated under $e$, i.e. we have $e^{-1}(e(I)) = I$. To see this, fix some module $M$ over $\Pos'$ such that $e^{*}(M) \cong \Field[I]$. 
Then, by construction, $I = e^{-1}(S)$ where $S$ is the support of $M$, and thus $I$ is saturated.
A convenient consequence of this property is that intersections of intervals are compatible under common encodings, in the sense that
\[
e( \bigcap I_j) = \bigcap e(I_j),
\]
whenever $e \colon \Pos \to \Pos'$ is a common encoding map for a family of interval modules $\Field[I_j]$.
\end{remark}

As an immediate consequence of Lemma \ref{L:common_encoding} and the additivity of pullback functors, we obtain that the sum of two finitely encoded persistence modules (of a certain class $\mathfrak{X}$) is again finitely encoded (and of class $\mathfrak{X}$). 
In other words, we have:

\begin{corollary}
For any encoding structure $\mathfrak{X}$ on $\Pos$, the category $\permodx$ is an additive subcategory of $\Vect^{\Pos}$.
\end{corollary}
For our purposes, we are particularly interested in the cases where $\permodx$ is a full {\em abelian} subcategory of $\Vect^\Pos$. 
Even if, as we have already recalled, it is in general not true, under certain connectivity assumptions, which are fulfilled for the algebra of cubes and certain subalgebras of the PL and semialgebraic sets, this turns out to be the case. 
We provide now a brief introduction to the topic and refer to \cite{Lukas} for more details and the full proofs.

\begin{definition}
We say that a poset $\Pos$ is \define{$\leq$-connected} if it is connected as a category. 
In other words, if, for every $q,q' \in \Pos$, there exists a zigzag
\[ 
q \lessgtr q_1 \lessgtr  \dots \lessgtr  q_k \lessgtr  q'
\] 
of inequalities between the two elements, where $\lessgtr$ denotes either $\leq$ or $\geq$. 
The maximal $\leq$-connected subsets of $\Pos$ are called the \define{$\leq$-components} of $\Pos$. 
We denote the set of these by $\pi_0(\Pos)$.
\end{definition}

\begin{remark}\label{ex:cube-comp}
Every cube in $C \subset \mathbb R^n$ is $\leq$-connected. 
To see this, note that a zigzag between any two elements $x=(x_1,\dots,x_n)$ and $y=(y_1,\dots,y_n)$ of the cube is given by the elements 
\[
x\lessgtr (y_1,x_2\dots,x_n) \lessgtr (y_1, y_2,x_3 \dots, x_n) \lessgtr \dots\lessgtr  (y_1,\dots,y_{n-1}, x_n) \lessgtr y \, .
\]
As a cube is a product of subsets of $\mathbb R$, all of these elements are again in $C$.
\end{remark}

More generally, every element of the closed below encoding structures described in \cref{ex:more_general_classes_of_enc_struct} has finitely many $\leq$-connected components.

We now describe how the connected components of intervals of an encoding structure $\mathfrak X$ influence the categorical properties of $\permodx$.

\begin{definition}
An encoding structure $\mathfrak X$ on a poset $\Pos$ is \define{connective} if every interval $I$ in $\mathfrak X$ has only finitely many $\leq$-components and these components are again elements of $\mathfrak X$. 
\end{definition}

\begin{remark}\label{rmk:C_conn}
The encoding structure of cubes $\mathfrak C$ is connective. 
Indeed, let $I \subset \mathbb R^n$ be an interval given by a finite union of cubes. 
First, note that we can write $I$ as a finite disjoint union of cubes $C_i$. 
By Example \ref{ex:cube-comp}, each of these cubes is $\leq$-connected. 
Hence, all of the $\leq$-components of $I$ can be written as a finite disjoint union of $C_i$. 
As the components are pairwise disjoint and there are only finitely many $C_i$, there can only be finitely many $\leq$-components.
\end{remark}

For connective encoding structures $\mathfrak X$, the category $\permodx$ is a full abelian subcategory of $\Vect^{\Pos}$ (Theorem \ref{thm:X_conn_abelian}).
However, the analogous argument as in Remark \ref{rmk:C_conn} does not work in the general piecewise linear or semialgebraic setting. 
In fact, the antidiagonal $\{(x,y) \mid x= -y\}$ already is a PL interval with infinitely many $\leq$-connected components.
For many applications, it can be of interest to allow for PL or semialgebraic encodings (compare \cite{Miller-hom}) while still retaining the abelianity of the underlying category. 

\begin{example}\cite[Rem. 3.18]{Lukas}
    For any of the encoding structures described in \cref{ex:more_general_classes_of_enc_struct}, the closed below encoding structure is connective.
\end{example}

For connective encoding structures, we have the following theorem:

\begin{theorem}(\cite[Thm. 3.4]{Lukas})
\label{thm:X_conn_abelian}
Let $\mathfrak X$ be a connective encoding structure on the poset $\Pos$. 
Then the category $\permodx$ is a full abelian subcategory of $\Vect^{\Pos}$.
\end{theorem}

\begin{corollary}
The category $\permod$ is a full abelian subcategory of $\Vect^{\mathbb{R}^{n}}$.
\end{corollary}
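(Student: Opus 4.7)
The plan is to deduce this corollary as an immediate specialization of Theorem \ref{thm:X_conn_abelian}. That theorem asserts that whenever $\mathfrak X$ is a connective algebra on a poset $\Pos$, the category $\permodx$ is a full abelian subcategory of $\Vect^{\Pos}$. Setting $\Pos = \mathbb R^n$ and $\mathfrak X = \mathfrak C$, the category on the left-hand side is by definition $\permod$, so the only thing that remains to be checked is that $\mathfrak C$ satisfies the hypothesis of being a connective algebra.

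This verification has already been carried out in Remark \ref{rmk:C_conn}: given any interval $I \in \mathfrak C$, one writes $I$ as a finite disjoint union of cubes $C_i$, each of which is $\leq$-connected by Remark \ref{ex:cube-comp}. Since the $C_i$ are pairwise disjoint, every $\leq$-component of $I$ is itself a finite disjoint union of some of the $C_i$, and in particular lies again in $\mathfrak C$. There are only finitely many such components because there are only finitely many $C_i$. Thus $\mathfrak C$ is connective, and the corollary follows at once by applying Theorem \ref{thm:X_conn_abelian}.

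Since both the abelianity statement and the connectivity verification are already available in the excerpt, the proof is essentially a one-line citation, and there is no substantial obstacle. If anything, the only delicate point to highlight explicitly would be the matching of notation, namely confirming that the category $\permod$ defined in the paper's \texttt{notation} coincides with $\mathrm{PerM}_{\mathfrak C}(\mathbb R^n)$ as considered in the theorem, which is clear from the definition.
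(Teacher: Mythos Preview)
Your proposal is correct and matches the paper's own (implicit) argument: the corollary is stated immediately after Theorem~\ref{thm:X_conn_abelian} without a separate proof, precisely because it follows by specializing $\Pos = \mathbb{R}^n$, $\mathfrak X = \mathfrak C$, and invoking Remark~\ref{rmk:C_conn} for connectivity. There is nothing to add.
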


The connective components of an interval are the key part to understanding morphisms between interval modules (see \cite{BauerLesnick14} for many examples). 
We will make extensive use of this understanding in \cref{S_classification_amplitudes}.
As a direct consequence of \cite[Prop. 3.10]{Miller-hom}, morphisms between interval modules can be explicitly classified as follows.

\begin{corollary}\label{prop:mor_between_int_mod} 
Let $I, I' \subset \Pos$ be intervals, $C \in \pi_0(I \cap I')$ and $p,q \in C$. 
Set $S \coloneqq \{ J \in \pi_0(I \cap I') \mid J \subsetl I, J \subsetr I' \} $.
Then 

\begin{enumerate}
    \item for every morphism $f \colon \Field[I] \to \Field[I']$, we have $f_p = f_q$ for every $p,q\in\Pos$. Hence, we can denote by $f_C$ the canonical element of $\Field$ specifying $f_p\colon \Field \to \Field$;
    \item The linear map $\Hom_{\permodq{\Pos}}(\Field[I],\Field[I']) \to \Field^S$ that associates $f \mapsto (f_C)_{C \in S}$ is an isomorphism. 
\end{enumerate} 

\end{corollary}
\begin{definition}\label{def_cat_intervals_encoding}
Let $\mathfrak{X}$ be an encoding structure on a poset $\Pos$. 
The \define{category of encoding intervals} $\AlgInt$ has as objects intervals $I$ in $\mathfrak X$ and morphisms given by  
\begin{equation*}
\Hom_{\AlgInt}(I,I') \coloneqq 
\{J \subset I \cap I' \mid J \in \AlgInt, J \subsetl I , J \subsetr I'  \}
\end{equation*}
with $\mathrm{id}_{I} = I$ and composition given by $J' \circ J \coloneqq J \cap J'$.
\end{definition}

\begin{remark}\label{rem:int_mod_func}
One can use \cref{prop:mor_between_int_mod} to make the construction in \cref{ex:encoding_intervals} functorial:
\begin{align*}
    \Field[-] \colon \AlgInt & \to \permodx \\
    J \subset I \cap I' & \mapsto g
\end{align*}
where $g$ is the unique morphism in $\Hom_{\permodq{\Pos}}(\Field[I],\Field[I'])$ specified by $(f_C)_{C \in S} \in \Field^{S}$, with $f_C = 1$ if $C\subset J$ and zero otherwise. 
\end{remark}

\begin{remark}
Recall that a subquotient of $A \in \mathcal{A}$, in an abelian category $\mathcal A$, is an object $B$ together with an epimorphism $f \colon A \twoheadrightarrow C$ and a monomorphism $ B \hookrightarrow  C$. 
We say that $B$ admits the structure of a subquotient of $A$ if such a pair of arrows exists. 
By considering the pullback square 
    \[
    \begin{tikzcd}
        C' \arrow[d, two heads] \arrow[r, hook] & A \arrow[d, two heads] \\
        B  \arrow[r, hook]& C 
    \end{tikzcd}
    \]
the cospan can be equivalently replaced by a span given by a monomorphism $C'\hookrightarrow A$ and an epimorphism $C' \twoheadrightarrow B$.
\end{remark}
\cref{rem:int_mod_func} induces the following dictionary between algebraic and poset language. 

\begin{corollary}\label{cor:intervals_vs_modules}
Let $\mathfrak X$ be a connective encoding structure, $I \in \AlgInt$, and $M \in \permodx$. 
We denote by $\mathrm{sub}(M),\mathrm{quot}(M)$ and $\mathrm{subquot}(M)$ the isomorphism classes of subobjects, quotient objects, and subquotient objects of $M$, respectively. 
The functor $I \mapsto \Field[I]$ described in \cref{rem:int_mod_func} together with \cref{prop:mor_between_int_mod} induces natural bijections 
        \begin{align*}
            \{ J \in \AlgInt \mid J \subsetr I \} &\cong \mathrm{sub}(\Field[I]) \\
            \{ J \in \AlgInt \mid  J \subsetl I \} &\cong \mathrm{quot}(\Field[I]) \\
              \{ J \in \AlgInt \mid J \subset I \}&\cong \mathrm{subquot}(\Field[I]).
        \end{align*}
Furthermore, unions can be translated into sums and extensions as follows. 
Let $I_0, I_1 \in \AlgInt$. 
Then
    \begin{enumerate}
       \item $I = I_0 \cup I_1$, for $I_0, I_1 \subsetr I$, if and only if there exists an epimorphism \[ \Field[I_0] \oplus \Field[I_1] \twoheadrightarrow \Field[I] \] which is componentwise a monomorphism.
        \item $I = I_0 \cup I_1$, for $I_0, I_1 \subsetl I$, if and only if there exists a monormophism \[ \Field[I] \hookrightarrow \Field[I_0] \oplus \Field[I_1]  \] which is componentwise an epimorphism.
        \item $I = I_0 \cup I_1$, for $I_0 \subsetr I$, $I_1 \subsetl I$ with $I_0 \cap I_1 = \emptyset$ if and only if there is a short exact sequence \[ 0 \to \Field[I_0] \to \Field[I] \to \Field[I_1] \to 0 . \]
    \end{enumerate}
\end{corollary}

Finally, we describe how one may restrict and extend finitely encoded persistence modules from and to intervals. 
This result will be an important ingredient for our investigation of when two persistence modules of distance $0$ are isomorphic in \cref{S:discriminativity} (see \cref{lem:poinwise_iso_lemma}).

\begin{proposition}\label{prop:extension_by_zero}
Let $I \in \mathfrak X$. 
Denote by $\mathfrak X|_{I}$ the algebra on $I$ given by elements of the form $S \cap I$, for $S \in \mathfrak X$. 
Then the following holds:
    \begin{enumerate}
    \item $\mathfrak X|_{I}$ is an encoding structure on $I$.
    \item If $\mathfrak X$ is connective, then so is $\mathfrak X|_{I}$.
    \item The extension by $0$ functor 
        \begin{align*}
        -|^{\Pos} \colon \permodx[ I][\mathfrak X|_{I}] &\to \permodx \\
        M &\mapsto \{ p \mapsto \begin{cases}
            M_p & p \in I \\
            0 & p \notin I
            \end{cases}\, ,
        \end{align*}
    with the obvious structure morphisms and definition of morphism, is well-defined and fully faithful. If $\mathfrak X$ is connective, then $-|^{\Pos}$ defines an exact fully faithful embedding of abelian categories.
    \item For $M \in \permodx$, the module $(M|_{I})|^{\Pos}$ is naturally a subquotient of $M$.
    \end{enumerate}
\end{proposition}

\begin{proof}
1. and 2. are immediate from the fact that every interval of $I$ is also an interval of $\Pos$.

To show the remaining two points, we restrict without loss of generality to the case where $I$ is either an upset or a downset in $\mathfrak X$. 
Indeed, the general case is a composition of the latter two. 
By dualizing, it suffices to show the case where $I$ is an upset. Denote by $i$ the inclusion $I \hookrightarrow \Pos$.
Consider the left adjoint to the restriction functor $i^* \colon \Vect^{\Pos}\to \Vect^{I}$, namely $i_! \colon \Vect^{I} \to \Vect^{\Pos}$.
$i_!$ is given by left Kan-extension, i.e. we have $(i_!M)_p = \varinjlim_{q \leq p, q\in I} M_q$, with the induced structure morphisms. 
Since $I$ is an upset, it follows that $(i_!M) = M^{\Pos}$, as defined above. 
The counit of adjunction $i_! \colon i^*M \to M$ is given by the identity at $p \in I$ and by $0 \hookrightarrow M_p$ otherwise. 
In particular, it is given by a monomorphism and $(M|_{I})|^{\Pos}$ is naturally a submodule of $M$, which shows point 4. 
The unit map $N \to i^* i_!N$ is an isomorphism, indeed for every $p \in I$ the indexing category $\{q \leq p, q\in I\}$ has terminal object $p$. 
It follows that $i_!$ is fully faithful. 
By construction, it is exact.
It remains to see that $i_!$ restricts to a functor of the subcategories of $\mathfrak X|_{I}$ and $\mathfrak X$ encodable submodules. 
Let $M \in \permodx[ I][\mathfrak X|_{I}]$ with an encoding map $e \colon I \to \Pos'$, and encoding module $N \in \permodq{\Pos'}$. 
We extend $e$ to a map of posets $e_{-\infty}\colon\Pos \to \Pos'_{- \infty}$, by sending every point not in $I$ to $- \infty$. 
Since $I$ is an upset, this construction defines morphism of posets, which has fibers in $\mathfrak X$.
Denote by $N_{-\infty}$ the extension of $N$ by $0$ at $- \infty$.
Then 
\[
M|^{\Pos} \cong (e^* N)|^{\Pos} \cong e_{-\infty}^*(N_{-\infty}) \, . 
\]
It follows that $M|^{\Pos}$ is $\mathfrak X$ encoded, as was to be shown.
\end{proof}

\section{Amplitudes and amplitude distances in TDA}\label{S:stability}
It turns out that most of the distances classically relevant in topological data analysis fit into the framework of amplitudes, at least up to equivalence of metrics. 
Most prominently, this includes the Bottleneck distance and the Wasserstein distance, and their multiparameter variations, the generalized interleaving distances and measure distances of \cite{Berkouk_2021,Bubenik2014,BSS18,Lesnick2015,skraba_turner}. 
Working with distances in the framework of amplitudes has two important advantages. 
Firstly, amplitudes give a ready-to-use way to construct new distances, which capture specific features of a persistence module, depending on the choice of amplitude. 
For example, this allows one to define a family of distances based on the $k$-longest bars amplitudes of \cite{K19_Tropical}, which interpolates between the Bottleneck and the $1$-Wasserstein distances. 
Secondly, after one has interpreted two distances in the language of amplitudes, the results of \cite{skraba_turner} give an easy way of comparing these distances by investigating the relations between their amplitudes and cost functions. 

The goal of this section is to point out the most prominent examples of amplitude distances that occur in TDA and some of the relations between them that arise directly from the relations of their amplitudes. 

\subsection{Interleaving distance and shift amplitudes}
Arguably, the most used on persistence theory is the interleaving distance \cite{Bubenik2014,isometry1,Lesnick2015}: Given a persistence module $M \in \Vect^{\mathbb R^n}$ and $\varepsilon \in [0,\infty)$ we denote by $M_{+\eps}$ the persistence module obtained by shifting $M$ by the vector $\vv{\varepsilon} = \varepsilon (1, \dots, 1)$, i.e. we set $(M_{+\varepsilon})_p \coloneqq M_{p + \vv{\varepsilon}}$ with the obvious structure morphisms inherited from $M$. 
This construction induces an action of $[0, \varepsilon)$ on the category $\Vect^{\mathbb R^n}$. 
We denote by $\eta_{\varepsilon}\colon M \to M_{+\varepsilon}$ the natural transformation given by $m \mapsto x^{\vv{\varepsilon}}m$.
\begin{definition}
    An \define{$\eps$-interleaving} between $M$ and $N$ in $\Vect^{\mathbb R^n}$ is given by morphisms $f\colon M\to N_{+\eps}$ and $g\colon M\to N_{+\eps}$ in $\permod$, such that $g_{+ \varepsilon}\circ f=\eta_{2\varepsilon,M}$ and $f_{+ \varepsilon}\circ g= \eta_{2\varepsilon,N}$.
	The \define{interleaving distance} between two objects $M$ and $N$ in $\Vect^{\mathbb R^n}$ is given by
	\[
	\dist_{I}(M,N)\coloneqq
	\inf\{\eps> 0 \mid \exists \text{ an $\eps$-interleaving between $A$ and $B$}\}
	\]
	with the convention $\inf\emptyset = \infty$.
\end{definition}

The interleaving distance is strongly related to the shift amplitude $\shift$ of \cref{def:shift_amplitude} along the vector $(1,\dots,1)$. 
We sketch why this is the case: the natural map $\eta_{\varepsilon} \colon M \to M_{+\varepsilon}$ has the property that $\shift(\ker(\eta_{\varepsilon})), \shift( \coker (\eta_{\varepsilon})) \leq \varepsilon$. 
It follows that $\dist_{\shift}(M,M_{\varepsilon}) \leq 2\varepsilon$. 
Another immediate consequence of the defining equations for an $\varepsilon$-interleaving $f,g$ as above is that $\ker(f)\subset \ker(\eta_{2\varepsilon,M})$ and that $\coker(f)$ is a quotient of $\coker(\eta_{2\varepsilon,N})$, and thus $\shift( \ker(f)), \shift( \coker(f))  \leq 2 \varepsilon$. 
Hence, it follows from the triangle inequality that $\dist_{\shift}(M,N) \leq 6\dist_{I}(M,N)$. 
The converse inequality even holds without an additional factor, as demonstrated in \cite[Prop. 12.2]{oliver}.
In total, the following relationship between interleaving and shift amplitude distance holds. 

\begin{proposition}\label{prop:interleaving_shift_path_metric}\cite[Prop. 12.2]{oliver}
    Let $M,N \in \Vect^{\mathbb R^n}$, then we have
    \[
    \frac{1}{6}\dist_{\shift}(M,N) \leq \dist_{I}(M,N) \leq \dist_{\shift }(M,N) \, .
    \]
\end{proposition}

\begin{remark}
    The inequality of \cref{prop:interleaving_shift_path_metric} still holds in any scenario where the definition of the interleaving distances makes sense, i.e. on the categories $\permodx$, whenever $\mathfrak X$ is an encoding structure which is closed under shifts along $\vv{\varepsilon}$ for $\varepsilon >0$.
\end{remark}
Up to equivalence of metrics, we may thus study interleaving-style distances in the framework of amplitudes. 

\subsection{Wasserstein distances}

Another distance frequently used in persistence theory is the Wasserstein distance, which can be described in terms of matchings but also admits an algebraic interpretation. 
As shown in \cite{skraba_turner}, the Wasserstein distance can be reinterpreted as an $f$-span metric with $f$ given by $\pnorm{-}$ and $\alpha=\rho_p$.

\begin{definition}(\cite[Def. 7.7]{skraba_turner})\label{def_alg_wass}
Let $M$ and $N$ be two objects in $\permod[]$, and $p\in [1,\infty]$.
The algebraic $p$-Wasserstein distance between $M$ and $N$ is given by
\[
W_{p}^{alg}(M, N)=\dist_{f_{\alpha}}(M,N)=
\inf\limits_{(C,\varphi,\psi)}
\pnorm{\left( 
\rho_p\left(\ker\varphi\right), \rho_p\left(\coker\varphi\right), \rho_p\left(\ker\psi\right), \rho_p\left(\coker\psi\right)\right)} 
\]
where the infimum is taken over the spans $M\overset{\varphi}{\longleftarrow} C \overset{\psi}{\longrightarrow} N$, for $C\in\ob\permod[]$ and $f$ is the cost function $\pnorm{-}$.
\end{definition}

In \cite[Thm 7.27 and 7.28]{skraba_turner}, the authors prove that \cref{def_alg_wass} is indeed an alternative definition (up to equivalence of metrics) for the Wasserstein distance of persistence modules which are pointwise finite-dimensional with bounded $p$-energy. 
Using the equivalences of norms in $\mathbb{R}^4$ and the fact that $W_{p}^{alg}=\dist_{f_{\rho_p}}$, where $f$ is given by the $p$-norm on $\mathbb{R}^4$, we obtain the following equivalence:

\begin{corollary}
\label{th:relation_wasserstein_path} 
For $p\in [1,\infty]$ and $M,N \in \permod[]$, the following equivalence of metrics holds:
\[
W_{p}^{alg}(M,N) \leq \dist_{\rho_{p}}(M,N) \leq
4^{1-\frac{1}{p}} W_{p}^{alg}(M,N)\, .
\]
In particular, for $p=1$, we obtain $W_{1}^{alg} = \dist_{\rho_{1}}$.
\end{corollary}

\begin{remark}
In \cite[Th. 5.16]{BSS18}, a version of the case $p=1$ of \cref{th:relation_wasserstein_path} is proven under slightly different assumptions.
\end{remark}

In \cite{BSS18} the authors were looking to generalize the definition of Wasserstein distances to the multiparameter setting. 
They made use of path distances whose weight is an amplitude, constructed by integrating the Hilbert function $q \mapsto \dim M_q$ of a persistence module. 
It turns out that essentially every amplitude that is additive with respect to short exact sequences can be constructed in this fashion by replacing the Lebesgue measure with a \textit{content}. 
We explain this in more detail in \cref{SS:upper bound}.

\begin{example}\cite{BSS18}\label{e:Hilb_ampl}
    Let $\mathfrak X$ be a connective encoding structure on $\mathbb R^n$. 
    Every upset in $\mathbb R^n$ is Lebesgue measurable\footnote{ 
    See \cite[Thm.\ 4.4]{graham2006influence} for the case of $[0,1]^n$. 
    The proof there easily generalizes to $\mathbb R^n$.}. 
    Let $\lambda$ be the restriction of the Lebesgue measure to $\mathfrak X$. 
    Then we define the ($p$-)Hilbert amplitude of a module $M$ in $\permod[n]$, for $1 \leq p < \infty $:
    \begin{equation*}
    \hilbp{M}{p} =  \left(\int (\dim_{M})^p d \mu\right)^{\frac{1}{p}}
    \end{equation*}
    Note that this expression is indeed well defined, since $\dim_{M} \colon \mathbb R^n \to \mathbb N$ is a finite sum of indicator functions of Lebesgue measurable sets. 
    That is an amplitude is immediate from the elementary properties of $p$-norms and the behavior of Hilbert function under short exact sequences.
    If $p=1$, we simply write $\hilb{-}$.
    If $n=1$ and $p=1$, this coincides with $\rho_1$ (\cref{example_p-norms}). In particular, by the results of \cite{skraba_turner}, this means that $\hilbp{M}{p}$ is a generalization of the $1$-Wasserstein distance to the multiparameter scenario.
\end{example}
    
\cref{prop:exact_ampl_con} allows for a simple proof of the following stability result between interleaving distance and the measure distances defined in \cite{BSS18}. 

\begin{proposition}\label{thm:hilbert_bounds_inter}
Let $\mathfrak X$ be a connective encoding structure on $\mathbb R^n$ which is closed under shifts.
Let $\dist_I$ be the interleaving distance on $\permodx$ with respect to the vector $v=(1,\dots,1) \in \mathbb{R}^n$, $\dist_{\shift}$ the distance coming from the shift amplitude $\shift$ with respect to $v$ and $1 \leq p < \infty$.
Then we have
\[
\dist_I(M,N) \leq \dist_{\shift}(M, N) \leq 4 \left(\dist_{\hilbp{-}{p}}(M,N)\right)^{\frac{p}{n}}
\]
for all $M,N \in \ob \permod[n]$.
\end{proposition}

\begin{proof}
The first inequality is \cite[Prop. 12.2]{oliver}. 
We apply \cref{prop:exact_ampl_con}, to $g = \frac{p}{n}$ in Proposition \ref{prop:exact_ampl_con} taking $F$ to be the identity functor, $\alpha = \hilbp{-}{p}$, and $\alpha' = \shift$.
So, let $M \in \permod$ and, for any $u \in \mathbb R^n$ and $m\in M_u$, consider the unique morphism $\Field[u,\infty)\to M$ such that  $1 \mapsto m$. 
The image of this morphism is of the form $\Field [I]$ where $I= [u,\infty) \cap D$ for $D$ a downset in $\mathbb{R}^n$.  
By monotonicity of $\hilbp{-}{p}$, we have that $\hilbp{M}{p} \geq \lambda(S)^{\frac{1}{p}}$ where $\lambda$ is the Lebesgue measure and $S$ is the support of $M$. 
Note that if $x^{\eps v}m \neq 0$ then $S$ contains at least the cube with diagonal $\varepsilon$ with lower left corner at $u$. 
The latter has volume $\eps^n$. 
Hence, 
\[
\hilbp{M}{p} \geq \lambda(S)^{\frac{1}{p}} \geq \sup\{\eps \mid x^{\eps v}n \neq 0 \textrm{ for some }n \in M_w, w\in \mathbb R^n\}^{\frac{n}{p}} = \shift(M)^{\frac{n}{p}} .
\]
Applying $(-)^{\frac{p}{n}}$ concludes the proof.
\end{proof}

\subsection{The \texorpdfstring{$k$-}-longest interval distances}
In practice, often one only expects a certain number of intervals to carry relevant data. 
However, this number may vary in different situations, and one needs not only to be able to choose how many intervals to consider but also how to relate these different choices. 
We now discuss how to tackle these issues.

\begin{definition}
Let $k \in \N$. 
Given $M$ in $\ob\permod[]$, we write its interval decomposition $\oplus_{i=1}^n \mathbb{I}\langle b_i, b_i + \ell_i \rangle$, where each interval can be closed or open at endpoints, and assume (after reordering as necessary) that $\ell_1 \geq \ell_2 \geq \dots \geq \ell_n$.
Then the \define{$k$-longest-intervals amplitude} is defined as follows:
\[
\LI{k}\left(M\right) \coloneqq \sum_{i=1}^k \ell_i \, .
\]
\end{definition}

\begin{remark}
The $k$-longest-intervals amplitude is discussed as a key example of a ``tropical coordinate'' in \cite{K19_Tropical}.
Other tropical coordinates do not typically give amplitudes on $\permod[]$.
\end{remark}

\begin{proposition} \label{prop:li_amplitude}
For any $k \in \N$, the function $\LI{k}$ is an amplitude on $\permod[]$.
\end{proposition}

The proof relies on the matching theorem of \cite[Theorem 4.2]{BauerLesnick14}, which we recall for convenience.
For a persistence module $M$ in $\permod[]$, we denote by $\mathcal D_{M} = \{ \langle b^M_i, d^M_i \rangle \mid i=1,\dots,n\}$ the multiset of its barcode. 
Suppose we are given a short exact sequence of interval-decomposable persistence modules $0 \to A \to B \to C \to 0$. 
Then, by possibly adding empty intervals to the barcodes, we can find a common indexing of $\mathcal{D}_A$, $\mathcal{D}_B$, and $\mathcal{D}_C$ such that for all indices $i$, we have $d^A_i = d^B_i$, $b^B_i =b^C_i$, $b^A_i\geq b^B_{i}$, and $d^B_i \geq d^C_{i}$.
Graphically, for every bar in $B$, we can truncate the bar on the left to obtain a bar in $A$, and truncate the original bar on the right to obtain a bar in $C$, and every bar in $A$ and $C$ arises in this way.

\begin{proof}[Proof of \cref{prop:li_amplitude}]
Clearly $\LI{k}(0)=0$. 
Further, note that the monotonicity of $\LI{k}$ is an immediate consequence of the preceding discussion, as every bar in a submodule or quotient of a module $B$ corresponds injectively to a (potentially) longer bar in $B$.

We are left with showing subadditivity. 
As ephemeral modules (i.e. modules that are supported on points) do not contribute to $\LI{k}$, we assume without loss of generality all intervals involved to be left closed right open, to simplify the proof.

Given a short exact sequence $0 \to A \to B \to C \to 0$ of interval decomposable modules, let us first assume that $B$ has at most $k$ interval components. 
Then, by the discussion preceding this proof, so do $A$ and $C$.
Therefore, for $M = A$, $B$, or $C$, we have that $\LI{k}(M)$ is given by the sum of the lengths of all intervals.
In particular $\LI{k}(B) = \LI{k}(A) + \LI{k}(C)$.

For the general case, let $B'$ denote the submodule of $B$ given by the $k$ longest bars (in case there are multiple such modules, choose any). 
We then have a commutative diagram with vertical monomorphisms and exact rows
\[
\begin{tikzcd}
 0 \arrow[r] & \arrow[r] A' \arrow[d, hook] & \arrow[r]B' \arrow[d, hook] & \arrow[r] C' \arrow[d, hook]& 0 \\
0 \arrow[r]& A  \arrow[r] & B \arrow[r] & C \arrow[r] & 0
\end{tikzcd}
\]
where $A'$ is the intersection of $B'$ with the kernel of $B \to C$, and $C'$ is the image of $B'$ in $C$. 
By the special case above, together with monotonicity of $\LI{k}$, we obtain the claimed inequality:
\[\LI{k}(B) = \LI{k}(B') = \LI{k}(A') + \LI{k}(C') \leq \LI{k}(A) + \LI{k}(C) \, . \qedhere
\]
\end{proof}

We now show that we can always compare $k$-longest-interval amplitudes for different $k$ values.

\begin{lemma}\label{lem:id_pres_ampl_trop}
Let $k, \ell \in \mathbb{N}$ and let $\id_{\permod[]}\colon (\permod[],\LI{k})\to (\permod[],\LI{\ell})$ be given by the identity on objects.
\begin{enumerate}
    \item If $k \geq \ell$, then $\id$ is amplitude-bounding with constant $K=1$.
    \item If $1 \leq k \leq \ell$, then $\id$ is amplitude-bounding with constant $K = \ell /k$.
\end{enumerate}
\end{lemma}

\begin{proof}
1. This is immediate.

2. For a family of real numbers $(x_i)_{1 \leq i \leq N}$, we have the inequality
\[
\max_{S \subset \{1,\dots,N\}, |S| = \ell} \left( \frac{\sum_{i \in S} x_i}{\ell} \right) \leq \max_{S' \subset \{1,\dots,N\}, |S'| = k} \left( \frac{\sum_{i \in S'} x_i}{k} \right),
\]
Given $M$ in $\ob\permod[]$, let $(x_i)_{1 \leq i \leq N}$ be the lengths of the intervals in the barcode of $M$ (in arbitrary order).
Then 
\[
\LI{k}(A) = \max_{S \subset \{1,\dots,N\}, |S| = k} \sum_{i \in S} x_i,
\]
and similarly for $\LI{\ell}(A)$.
The stated inequality then implies the result.
\end{proof}

\begin{corollary}\label{cor:id_stable_k_longest}
Let $k,l \geq 1$ and let $\id_{\permod[]}\colon (\permod[],\LI{k})\to (\permod[],\LI{\ell})$ be given by the identity.
Then 
\[
\dist_{\LI{\ell}}(M,N)\leq \max \left\{1, \frac{\ell}{k} \right\} \dist_{\LI{k}}(M,N)
\]
for any $M,N \in\ob\permod[]$.
\end{corollary}

\begin{proof}
This follows from \cref{lem:id_pres_ampl_trop} and \cref{prop:Weaker_Ex_Cond}.
\end{proof}
Finally, let us discuss the relationship between Wasserstein distances and span metrics arising from the $k$-longest intervals amplitude.
\cref{cor:id_stable_k_longest} allows for a different perspective on the distances $\dist_{\LI{k}}$: they interpolate (in a monotone manner) between the interleaving and $1$-Wasserstein distances. 
Intuitively, it is quite clear: the interleaving distance is equal to the bottleneck distance \cite{Bubenik2014,isometry1,Lesnick2015}, i.e., it is measuring the cost of the longest interval, while the $1$-Wasserstein is adding the costs of all intervals. 
When $k$ increases, $\dist_{\LI{k}}$ considers the cost of more and more intervals, thus ``getting closer'' to the $1$-Wasserstein.  
Precisely, we have
\[ 
\dist_{I}(M,N) \leq \dist_{\LI{k}}(M,N) \xrightarrow{k \to \infty} W^1(M,N)\, .
\]
where the first inequality follows by \cite[Prop. 12.2]{oliver} and the limit is given by Corollary \ref{th:relation_wasserstein_path} since, for any $M,N \in \permod[]$, $\dist_{\LI{k}}(M,N) = W^1(M,N)$ for $k$ sufficiently large.

\section{Classifications of amplitudes in TDA}\label{S_classification_amplitudes}

In this section, we show how every amplitude on staircase persistence modules is bounded above and below by two classes of functions, and we show that these two classes are in bijection with two classes of amplitudes. 
To do so, we study amplitudes with respect to their values on interval modules. 
We then obtain a classification result for the classes of additive amplitudes, which are amplitudes that are additive on short exact sequences), as well as what we call ``quasi-simple'' amplitudes, which are amplitudes that behave like a maximum on direct sums.
Our classification of quasi-simple amplitudes provides a generalization of \cite[Thm. 9.6]{oliver}. 
\medskip

Throughout this section, we assume that $\Pos$ is a fixed poset and $\mathfrak X$ is a connective encoding structure on $\Pos$.

\begin{definition}\label{D:funct interv}
Given an amplitude $\alpha$ on $\permodx$, the \define{function on intervals associated to $\alpha$} is given by
\begin{align*}
    \AmpMeas \colon \AlgInt &\to [0,\infty] \\
    I &\mapsto \alpha(\Field[I]) \, .
\end{align*}
\end{definition}

We first note some properties of $\AmpMeas$ which may be immediately verified from the defining properties of an amplitude and \cref{cor:intervals_vs_modules}.

\begin{lemma}\label{L:prop funct interv}
    Let $\alpha$ be an amplitude on $\permodx$. 
    Then, for all $I,I_0,I_1 \in \AlgInt$ the following hold:
    \begin{enumerate}
        \item $\AmpMeas(\emptyset) = 0$ ;
        \item If $I_0 \subset I_1$ then 
        $\AmpMeas(I_0) \leq \AmpMeas(I_1)$ ;
        \item If $I = I_0\cup I_1$, where, for each $i= 0,1$, $I_i \subsetl I$ or $I_i \subsetr I$, then 
        $ \AmpMeas ( I ) \leq \AmpMeas (I_0) + \AmpMeas (I_1)$.
    \end{enumerate}
\end{lemma}

The function on intervals provides lower and upper bounds for the amplitude it is associated with:

\begin{proposition}\label{prop:upper_and_lower_bound_for_amp}
Let $M \in \permodx$ and $\alpha$ an amplitude on $\permodx$. 
Consider any $\mathfrak X$-encoding  of $M$, given by   $e \colon \Pos \to \Pos'$ and $M' \in \permodq{\Pos'}$. We have  the following inequalities:
\begin{equation}\label{intervals_bound_amplitude}
    \max_{p \leq q , M'_p \to M'_q \neq 0} \AmpMeas( e^{-1}[p,q]) \labelrel\leq{ineq_simple}  \alpha(M) \labelrel\leq{ineq_additive} \sum_{q \in \Pos'} \dim (M'_q) \AmpMeas(e^{-1}(q)) \, .
\end{equation}
\end{proposition}

\begin{proof}
We begin by proving inequality~\eqref{ineq_simple}.
Let $p\leq q$. 
If $M'_p \to M'_q$ is nonzero, then there exists a morphism $\phi \colon \Field[p, \infty) \to M'$, such that $\phi(t)$ is nonzero for all $t\in [p,q]$. 
Hence, the image of $\phi$ is an interval module $\Field[I]$, for some $I \subset [p,\infty)$, such that $[p,q] \subseteq I$. 
It follows that $\Field[p \leq q]$ is a quotient of $\Field[I]$, and we obtain the left-hand-side diagram in (\refeq{D:first ineq}). 
By applying $e^*$, we obtain the right-hand-side diagram in (\refeq{D:first ineq}). 

\begin{equation}\label{D:first ineq}
\begin{tikzpicture}[baseline=(current  bounding  box.center)]
\node (1) at (0,0) {$\Field[I]$};
\node (2) at (1.5,0) {$M'$};
\node (3) at (0,-1.5) {$\Field[p\leq q]$};
\draw[->>] (1)--(3);
\draw[->, right hook-latex] (1)--(2);
\node (1) at (5,0) {$e^\ast\left(\Field[I]\right)$};
\node (2) at (7.2,0) {$e^\ast\left(M'\right)$};
\node (3) at (5,-1.5) {$e^\ast\left(\Field[p\leq q]\right)$};
\draw[->>] (1)--(3);
\draw[->, right hook-latex] (1)--(2);
\end{tikzpicture}
\end{equation}

Since $e^*M' \cong M$,  inequality~\eqref{ineq_simple} follows by monotonicity of amplitudes.

We now prove inequality~\eqref{ineq_additive}. 
We proceed by induction over the number $n$ of elements $q \in \Pos'$ for which $M'_q \neq 0$. 
If $n=0$, the inequality is trivially satisfied. 
If $n=1$, and $q \in \Pos'$ is such that $M'_q \neq 0$ is of dimension $k$, then
\[
M \cong e^*(M') \cong e^* (\Field[q])^k \cong \Field[e^{-1}(q)]^k. 
\] 
It follows from the subadditivity of amplitudes that $\alpha(M) = \alpha(\Field[e^{-1}(q)]^k) \leq k \alpha(\Field[e^{-1}(q)])$. 
We now assume the claim to be true for $n$, and show that then it holds for $n+1$.
Take $q \in \max\left(\{p\in\Pos' \ \vert \ M'_q \neq 0\}\right)$. 
Denote by $M'' \in \permodq{\Pos'}$ the persistence module with $M''_p = M'_q$ if $p=q$ and $M''_p = 0$ otherwise. 
By maximality, there is a short exact sequence
\begin{equation}\label{ses_inequalities_classification}
0 \to M'' \to M' \to M' / M'' \to 0 \, .
\end{equation}
By construction, $\vert\{p\in\Pos'\ \vert \ \left(M'/M''\right)_p\neq 0\}\vert= n$, since $M'/M''$ and $M$ differ only for their evaluation at $q$.
In particular, the inductive assumption holds for to $e^*(M'/M'')$.
Applying $e^*$ to (\ref{ses_inequalities_classification}), we obtain a short exact sequence 
\[
0 \to e^*(M'') \to M \to e^* \left(M' / M''\right) \to 0. 
\]
From it, by subadditivity of amplitudes and inductive assumption, we obtain the claim:
\begin{align*}
    \alpha(M) & \leq \alpha(e^\ast(M'')) + \alpha(e^\ast \left(M' / M''\right)) \\ 
    & = \sum_{q' \in \Pos'} \dim (M''_{q'}) \AmpMeas(e^{-1}(q')) + \sum_{q' \in \Pos'} \dim (\left(M' / M''\right)_q) \AmpMeas(e^{-1}(q')) \\
    &= \sum_{q' \in \Pos'} \dim (M''_{q'} + \left(M' / M''\right)_{q'})  \AmpMeas(e^{-1}(q')) 
    = \sum_{q' \in \Pos'} \dim (M'_{q'})  \AmpMeas(e^{-1}(q')) \, .\qedhere
\end{align*}
\end{proof}

Next we aim to characterize the amplitudes for which one of the two bounds in Equation \refeq{intervals_bound_amplitude} is attained (for all $M$): in Subsection \ref{SS:lower bound} we classify the class of amplitudes that realizes the lower bound \eqref{ineq_simple}, while in Subsection \ref{SS:upper bound} we classify the one that realizes the upper bound \eqref{ineq_additive}.

\subsection{Quasi-simple amplitudes}\label{SS:lower bound}

The goal of this subsection is to show that the amplitudes for which inequality~\eqref{ineq_simple} is an equality, for any encoding, behave like a maximum on direct sums of persistence modules (\cref{thm:classification_of_quasi_simple_amp}). 
We call this type of amplitudes \textit{quasi-simple}.
To reach this goal we first describe some characterizing properties of quasi-simple amplitudes (\cref{prop:char_quasi_simple_amp} and \cref{def_quasisimple}).
Then we define a class of functions over intervals that behave essentially like taking the diameter of an interval in the direction of the poset and are therefore called \textit{directed diameters} (\cref{def_directeddiam}). 
In this section, we prove \cref{thm:classification_of_quasi_simple_amp}, which shows that amplitudes which behave like a maximum under direct sums are in bijection with directed diameters. 
\begin{proposition}\label{prop:char_quasi_simple_amp}
Let $\alpha$ be an amplitude on $\permodx$. 
Then the following conditions are equivalent:
\begin{enumerate}
    \item For all $M,N \in \permodx$, $\alpha( M \oplus N) = \max \{\alpha(M), \alpha(N)\}$.
    \item Let $I_i \in \AlgInt$. For all $M \in \permodx$, the following holds: If there exists an epimorphism $\bigoplus_{i=1,\dots, n} \Field[I_i] \twoheadrightarrow M$ such that the resulting morphism $\Field[I_i]\to M$  is  a monomorphism for every $i$, then \[ \alpha(M) = \max \{\AmpMeas(I_i) \mid i= 1,\dots,n\} \, . \]
    \item Let $I_i \in \AlgInt$. For all $M \in \permodx$, the following holds: If there exists a monomorphism $M \hookrightarrow \bigoplus_{i=1,\dots, n} \Field[I_i]$  such that the resulting morphism $ M\to \Field[I_i]$  is  an epimorphism for every $i$, then \[\alpha(M)  = \max \{\AmpMeas(I_i) \mid i= 1,\dots,n\} \, .\]
    \item For every $M \in \permodx$ and every encoding of $M$ with $e \colon \Pos \to \Pos'$ and $M' \in \permodq{\Pos'}$, we have: \[\alpha(M) = \max_{p \leq q , M'_p \to M'_q \neq 0} \AmpMeas( e^{-1}([p,q])\, .\] 
    \end{enumerate}
\end{proposition}

\begin{proof}
The implications (1.$\implies$2.) and (1.$\implies$3.) follow directly from the amplitudes' axioms. 
We now show (2.$\implies$1.). 
For $M, N \in \permodx$ choose upsets $U_i,V_j \in \mathfrak X$, with $i=1, \dots, n$ and $j=1,\dots,m$, and epimorphisms $\oplus \phi_i\colon \oplus \Field[U_i] \twoheadrightarrow M$ and $\oplus \psi_i\colon \oplus \Field[V_i] \twoheadrightarrow N$, see \cite[Theorem 6.12]{Miller-hom}.
Let $I_i \subseteq U_i$ and $J_j \subseteq V_j$ be the subsets given by the points in $\Pos$ where $\phi_i$ and $\psi_j$ are nonzero, respectively. 
We then have induced natural isomorphisms $\Field[I_i] \cong \textnormal{im} (\phi_i)$ and $\Field[J_j] \cong \textnormal{im} (\psi_j)$.
In particular, we obtain induced epimorphisms $\oplus \Field[I_i] \twoheadrightarrow M$ and $\oplus \Field[J_j] \twoheadrightarrow N$, which are componentwise monomorphisms,
and consequently also an epimorphism
\[
\oplus \Field[I_i] \bigoplus \oplus \Field[J_j] \twoheadrightarrow M \oplus N,
\]
which is componentwise a monomorphism.
Then, applying the assumption twice, we have
\begin{align*}
\alpha(M \oplus N) &= \max_{I\in \{I_1,\dots, I_n,J_1,\dots , J_m\}}\{\AmpMeas(I)\}
    =\max \left\{ \max_{i=1,\dots,n} \{\AmpMeas(I_i)\},  \max_{i=j,\dots,m} \left\{\AmpMeas(J_i) \right\} \right\}\\
    &= \max\{\alpha(M),\alpha(N)\} \, .
\end{align*}

The proof of (3.$\implies$1.) follows a similar argument by duality. 

To see (2.$\implies$4.), choose some encoding $e\colon \Pos \to \Pos'$, $M' \in \permodq{\Pos'}$ of $M \in \permodx$. 
We have an epimorphism $\phi\colon \oplus_{p \in \Pos'} (\Field[p,\infty))^{\dim(M'_p)} \twoheadrightarrow M'$, which at some fixed $p \in \Pos'$ is an isomorphism when restricted to the $p$-component.
Next, we note that by \cref{cor:intervals_vs_modules}, the image of such a monomorphism $\phi_p$ is isomorphic to an interval module $\Field[I_{p}]$ where $I_{p} \subseteq \Pos'$ is an interval having a unique minimal element $p$.
Thus, by taking the direct sum of these interval modules, we have an epimorphism $\oplus (\Field[I_{p}]) \twoheadrightarrow M'$, which is componentwise a monomorphism. 
Diagrammatically, we illustrate the argument below. 
\[
\begin{tikzpicture}
\node (1) at (-1,0) {$\bigoplus_{p \in \Pos'} (\Field[p,\infty))^{\dim(M'_p)}$};
\node (2) at (3,0) {$M'$};
\node (3) at (-1,-2) {$\Field[p,\infty)$};
\node (4) at (3,-2) {$\mathrm{im} \phi_p$};
\node (5) at (6,0) {$\bigoplus \mathrm{im} \phi_p\cong \bigoplus (\Field[I_{p}]) $};
\draw[->>] (1) edge node[above] {$\phi$} (2);
\draw[->>] (5) -- (2);
\draw[->, right hook-latex] (3) -- (1);
\draw[->, right hook-latex] (4) -- (2);
\draw[->, right hook-latex] (4) -- (5);
\draw[->] (3) edge node[left] {$\phi_p\;\;\;$} (2);
\draw[ ->>] (3) -- (4);
\end{tikzpicture}
\]

Since $e^\ast$ is exact, we obtain an epimorphism $\oplus (\Field[e^{-1}(I_{p})]) \twoheadrightarrow M$, which is also a monomorphism componentwise.
It hence follows by the assumption that there exists an $s$ such that
\[
\alpha(M) = \AmpMeas( e^{-1}(I_s)) \, .
\] 
Let $p$ be the minimal element of $I_s$ and $q_0, \dots, q_n$ the finite set of maximal elements of $I_s$. 
Then $I_s = \bigcup_{i = 0, \dots, n} [p, q_i]$.
Thus, we have the canonical monomorphism $\Field[I_s] \hookrightarrow \oplus_{i = 1, \dots, n}\Field[p,q_i]$, which is componentwise an epimorphism.
This induces a monomorphism ${\Field[e^{-1}(I_0)] \hookrightarrow \oplus_{i = 1, \dots, n}\Field[e^{-1}[p,q_i]]}$
which is componentwise an epimorphism. 
Since we already showed that (2.$\iff$3.), from the hypothesis we have
\[
\alpha(M) = \AmpMeas( e^{-1}(I_s)) = \max_{i = 1, \dots, n} \AmpMeas (e^{-1}[p,q_i]). 
\]
Since $[p,q_i] \subseteq I_s$, for each $i=1,\dots,n$ it follows that $M'_{p} \to M'_{q_i}$ is nonzero. 
Hence we obtain
\[
\alpha(M) = \max_{p \leq q , M'_p \to M'_q \neq 0} \AmpMeas( e^{-1}([p,q]]).
\]
    
Finally, we show (4.$\implies$1.). 
Let $M,N \in \permodx$ and let $e\colon \Pos \to \Pos'$ be a common encoding map of $M$ and $N$, with encoding modules $M'$ and $N'$. We thus have  $e^\ast(M' \oplus N') \cong M \oplus N$. 
We note that $(M' \oplus N')_{p} \to (M' \oplus N')_{q}$ is nonzero if and only if one of the two components is nonzero. 
Thus, 
\[ \bigg\{p \leq q \ \vert \ (M'\oplus N')_p \to (M'\oplus N')_q \neq 0\bigg\}= \bigg\{p \leq q \ \vert \ M'_p \to M'_q \neq 0\bigg\} \bigcup \bigg\{p \leq q \ \vert \ N'_p \to N'_q \neq 0\bigg\}\,\]
and the claim follows.
\end{proof}

\begin{definition}\label{def_quasisimple}
    An amplitude $\alpha$ on $\permodx$ is \define{quasi-simple} if it satisfies any of the equivalent properties of \cref{prop:char_quasi_simple_amp}.
\end{definition}

For example, the shift amplitude (\cref{def:shift_amplitude}) is quasi-simple amplitude. 

\begin{remark}\label{R:simple amplitudes} 
We note that quasi-simple amplitudes generalize the simple noise systems defined in \cite{SC+19}. 
Under the equivalence of noise systems and amplitudes (see Proposition \ref{prop:equ_of_cat}), we have that noise systems can be thought of as corresponding to a specific kind of quasi-simple amplitudes. 
Namely, if one replaces finitely encoded by finitely presented persistence modules, simple noise systems  correspond to quasi-simple amplitudes that additionally satisfy the following two properties: (1) for every persistence module $M$  and every $\varepsilon \geq 0$, there exists a minimal subobject $M'$ with the property that $\alpha(M/M') \leq \varepsilon$; and (2) the subobject $M'$ has rank less or equal to $M$.
\end{remark}

As an immediate corollary of the characterization of quasi-simple amplitudes in \cref{prop:char_quasi_simple_amp} we obtain:

\begin{lemma}\label{lem:func_of_quasi_simple}
    If $\alpha$ is quasi-simple then for all  $I_0,I_1,I \in \mathfrak X$, where  $I_0$ and $I_1$ are both upsets or both downsets of $I$,  we have
    \[
    \AmpMeas ( I ) = \max \{ \AmpMeas (I_0),\AmpMeas (I_1) \} \, .
    \]
\end{lemma}

\begin{definition}\label{def_directeddiam}
A function $\dirdiam \colon \AlgInt \to [0, \infty]$ is a \define{directed diameter}, if for all $I,I',I_0,I_1 \in \AlgInt$ it fulfills the following properties:
    \begin{enumerate}
    \item $\dirdiam(\emptyset) = 0$;
    \item If $I'\subset I$, then $\dirdiam(I') \leq \dirdiam(I)$ ;
    \item If $I= I_0 \cup I_1$, $I_0 \subsetl I$, and $I_1 \subsetr I$, then $\dirdiam ( I ) \leq \dirdiam (I_0) + \dirdiam (I_1)$;
    \item If $I= I_0 \cup I_1$ and $I_0,I_1 \subsetr I$ or $I_0,I_1 \subsetl I$, then $ \dirdiam ( I ) = \max \{ \dirdiam(I_0), \dirdiam(I_1) \}$.
\end{enumerate}
\end{definition}

\begin{example}
Let $\mathfrak C$ be the staircase encoding structure and $C$ a cone in $\mathbb R^n$, and $\shift$ the shift amplitude (\cref{def:shift_amplitude}).
Then the function on interval associated to $\shift$, $\mu_\shift$, is a directed diameter on $\mathfrak C^{\textnormal{int}}$. 
Explicitly 
\begin{equation*}
    I \mapsto \sup \{ \pnorm[]{c} \mid c\in C \textnormal{ and } t + c \in I \textnormal{ for some } t\in I \} 
\end{equation*}
or equivalently
\begin{equation*}
    I \mapsto \inf \{ \pnorm[]{c} \mid c\in C \textnormal{ and } t + c \notin I \textnormal{ for all } t\in I \}.
\end{equation*}
The value $\mu_{\shift}(I)$ measures the diameter of $I$ in the direction of the cone $C$.
\end{example}

Before proving that the above example is not just a coincidence, i.e. that the function on interval associated to any quasi-simple amplitude is a directed diameter, we first note the following symmetry property of directed diameters:

\begin{proposition}\label{prop:dir_diam_amp}
Let $\dirdiam$ be a directed diameter on $\AlgInt$. 
Then, for every $M \in \permodx$, we have the following equalities
\begin{align*}
    \sup \{\dirdiam(I) \mid I \in \AlgInt, \Field[I] \hookrightarrow M \} &= \sup \{\dirdiam(I) \mid I \in \AlgInt, M \twoheadrightarrow \Field[I] \} \\
    &= \sup \{\dirdiam(I) \mid I \in \AlgInt, \textnormal{ $\Field[I]$ is a subquotient of $M$} \}
\end{align*}
\end{proposition}
\begin{proof}
It is straightforward to see that the first two expressions are lesser than or equal to the final one. 
By dualizing, it thus suffices to show that 
\[
\sup \{\dirdiam(I) \mid \textnormal{$I$ is a subquotient of $M$} \} \leq \sup \{\dirdiam(I) \mid \Field[I] \hookrightarrow M \}.
\]
Suppose we are given a subquotient zigzag $M \hookleftarrow N \twoheadrightarrow \Field[I]$. 
As in the proof of \cref{prop:char_quasi_simple_amp} we may choose interval modules $I_1, \dots, I_n \in \AlgInt$ together with an epimorphism
\[
\bigoplus_{i=1,\dots,n}\Field[I_i] \twoheadrightarrow N
\]
which is componentwise a monomorphism. 
Consequently, we also have an epimorphism
\[
\bigoplus_{i=1,\dots,n}\Field[I_i] \twoheadrightarrow \Field[I].
\]
By replacing $I_i$ with the interval corresponding to the image of $\Field[I_i] \to \Field[I]$, denoted $\hat I_i$, we obtain an epimorphism
\[
\bigoplus_{i=1,\dots,n} \Field[\hat I_i] \twoheadrightarrow \Field[I],
\]
which is componentwise a monomorphism. 
It follows by \cref{cor:intervals_vs_modules} that $I = \bigcup_{i=1,\dots,n} \hat I_i$ and $\hat I_i \subsetr I$. Consequently, by assumption, $\dirdiam(I) = \dirdiam(\hat I_k)$, for some $k$. 
By construciton, there is an epimorphism $\Field[I_k] \twoheadrightarrow \Field[\hat I_k]$. 
Hence, again by \cref{cor:intervals_vs_modules}, it follows that $\hat I_k \subsetl I_k$ and consequently that $\dirdiam(I) = \dirdiam(\hat I_k) \leq \dirdiam(I_k)$. 
The composition of monomorphisms
\[
\Field[I_k] \hookrightarrow N \hookrightarrow M,
\]
show that $\Field[I_k]$ is a submodule of $M$, with $\dirdiam (I_k) \geq \dirdiam (I)$, as claimed.
\end{proof}

\begin{theorem}\label{thm:classification_of_quasi_simple_amp}
The assignment $\alpha \mapsto \AmpMeas$ induces a bijection between quasi-simple amplitudes on $\permodx$ and directed diameters on $\AlgInt$.
The inverse map is given by $\dirdiam \mapsto \alpha_{\dirdiam}$, 
where $\alpha_{\dirdiam}\left(M\right) \coloneqq \sup \{\dirdiam(I) \mid I \in \AlgInt, \ \Field[I] \hookrightarrow M \} \big \}$.
\end{theorem}

\begin{proof}
By \cref{lem:func_of_quasi_simple} the image of $\alpha \mapsto \mu_{\alpha}$ is a directed diameter. 

We now show that $\alpha_{\dirdiam}$ is an amplitude. 
Note that by \cref{prop:dir_diam_amp}, we may freely replace the monomorphism in the definition of $\alpha_{\dirdiam}$ by an epimorphism or subquotient.
From the assumption $\mu( \emptyset ) =0$, we immediately have $\alpha_{\dirdiam}(0) = 0$. 
Monotonicity under monomorphisms (epimorphisms) follows directly from the monomorphism (epimorphism) description of $\alpha_\mu$, respectively. 
Finally, to prove the subadditivity, consider the diagram
\[\begin{tikzcd}
 & & \Field[I] \arrow[d, hook, "i"] & &  \\
0 \arrow[r] & M' \arrow[r] & M\arrow[r, "g"] & M'' \arrow[r]& 0
\end{tikzcd}\, ,
\]
with the lower row exact. 
We can complete the diagram into a commutative diagram with exact rows as follows:
\[\begin{tikzcd}
    0 \arrow[r] & i^{-1}M' \arrow[r] \arrow[d, hook]& \Field[I] \arrow[d, hook, "i"] \arrow[r] & \mathrm{im} g \circ i \arrow[d, hook] \arrow[r]& 0  \\
    0 \arrow[r] & M'\arrow[r] & M\arrow[r, "g"] & M''\arrow[r] & 0
\end{tikzcd}\, .
\]
Since $i^{-1}M'$ and $ \mathrm{im} g \circ i$ are, respectively, a submodule and a quotient of $\Field[I]$, by \cref{cor:intervals_vs_modules}, they are of the form $\Field[I_0]$ with $I_0 \subsetr I$ and $\Field[I_1]$ with $I_1 \subsetl I$, respectively, and $I = I_0 \cup I_1$. 
It follows that $\dirdiam(I) \leq \sum \dirdiam(I_0) + \dirdiam(I_1)$, which shows subadditivity. 

Next, we show that $\alpha_{\dirdiam}$ is quasi-simple. 
It is enough to prove that, for every $M,N \in \permodx$,
\[\alpha_{\dirdiam}(M_0 \oplus M_1) \geq \alpha_{\dirdiam}(M_0) + \alpha_{\dirdiam}(M_1)\, .
\] 
Given a monomorphism $\Field[I] \hookrightarrow M_0 \oplus M_1$ with $I \in \AlgInt$, we consider the image of the two components of this morphism. 
Since they are given by quotient objects of $\Field[I]$ by \cref{cor:intervals_vs_modules}, they are of the form $I_0, I_1 \subsetl I$, for $I_0,I_1 \in \AlgInt$. Furthermore, by construction, we have a monomorphism $   \Field[I] \hookrightarrow \Field[I_0] \oplus \Field[I_1]$. It follows, again by \cref{cor:intervals_vs_modules}, that $I = I_0 \cup I_1$ and therefore that $\dirdiam (I) = \max \{  \mu (I_0), \mu( I_1) \}$. 
Without loss of generality, assume that the maximum is attained for $I_0$.  
Then, since by construction we have an embedding $\Field[I_0] \hookrightarrow M_0$, it follows that $\alpha_{\dirdiam}(M) \leq  \alpha_{\dirdiam}(M_0)$, i.e. $\alpha_{\dirdiam}$ is quasi-simple.    
    
Finally, let us verify that the two constructions are inverse to each other. 
By \cref{prop:char_quasi_simple_amp}, in particular from the second and third characterization, it follows that if we start with an amplitude, pass to a directed diameter, and then pass back to amplitudes, we re-obtain the original amplitude. 
Conversely, by \cref{cor:intervals_vs_modules}, every interval $I$ is always maximal among intervals $J$, with the property that there exists a monomorphism $\Field[J] \hookrightarrow \Field[I]$, it also follows that starting with a directed diameter, passing to an amplitude, and then taking the function on intervals is the identity. 
\end{proof}

As an immediate corollary of \cref{prop:dir_diam_amp} and we \cref{thm:classification_of_quasi_simple_amp} obtain:

\begin{corollary}
Let $\alpha$ be a quasi-simple amplitude on $\permodx$. 
Then the following equalities hold for any $M \in \permodx$:
\begin{align*}
\sup \{\alpha(\Field[I]) \mid I \in \AlgInt, \Field[I] \hookrightarrow M \} & = \sup \{\alpha(\Field[I]) \mid I \in \AlgInt, M \twoheadrightarrow \Field[I] \} 
\\ 
&= \sup \{\alpha(\Field[I]) \mid I \in \AlgInt, \textnormal{ $\Field[I]$ is a subquotient of $M$} \}\, .
\end{align*}
\end{corollary}

\subsection{Additive amplitudes}\label{SS:upper bound}

Next, we classify the amplitudes for which inequality~\eqref{ineq_additive} is strict. 
We show that these are the \textit{additive amplitudes} (see Definition \ref{D:amplitude}), and can be described in terms of integrating, in some sense, the Hilbert function of a persistence module\footnote{Such a characterization may be understood as a Riesz-(Markov)-Representation type theorem for additive amplitudes (see \cite[Thm. 2.14]{rudinRealandComp} for a reference of the latter).}.  
As a consequence, we have that additive amplitudes take the same values on persistence modules that have the same Hilbert function but potentially different structure morphisms.   

\begin{proposition}\label{prop:char_additive_amp}
Let $\alpha$ be an amplitude on $\permodx$. 
Then the following conditions are equivalent:
\begin{enumerate}
    \item $\alpha$ is additive.
    \item For $M \in \permodx$  and $M' \in \permodq{\Pos'}$, $e \colon \Pos \to \Pos'$ part of an $\mathfrak X$-encoding of $M$, we have
    \[
    \alpha(M) = \sum_{q \in \Pos'} \dim (M'_q) \AmpMeas(e^{-1}(q))\, .
    \]
\end{enumerate}
\end{proposition}

\begin{proof}
The implication (1.$\implies$2.) may be proved with an inductive argument,  analogously to the proof of \cref{prop:upper_and_lower_bound_for_amp}, inequality \eqref{ineq_additive}, if one replaces the appropriate inequalities by equalities.  
    
For (2.$\implies$1.), we need to show that, given a short exact sequence $S = 0 \to A \to B \to C \to 0$ in $\permodx$, $\alpha(B)=\alpha(A)+\alpha(C)$. 
By \cite[Prop. 3.6]{Lukas}, we can take a sequence $S' = 0 \to A' \to B' \to C' \to 0$ in $\permodq{\Pos'}$, such that by pulling back $S'$ along $e \colon  \Pos \to \Pos'$ we obtain $S$. 
Moreover, $e$ can be chosen such that $e^*$ is fully faithful, and thus $S'$ is also exact.
Therefore we have for each $q \in \Pos'$
\[
\dim B'_q = \dim A'_q + \dim C'_q \, . 
\]
Consequently, we obtain:
\begin{align*}
    \alpha(B) &= \sum_{q \in \Pos'} \dim (B'_q) \AmpMeas(e^{-1}(q))
    = \sum_{q \in \Pos'} \dim (A'_q) \AmpMeas(e^{-1}(q)) + \sum_{q \in \Pos'} \dim (C'_q) \AmpMeas(e^{-1}(q))\textbf{} \\
    &= \alpha(A) + \alpha(C) \, . \qedhere
\end{align*}
\end{proof}

A direct consequence of the above characterization of additive amplitudes is that the associated function on intervals is completely determined by its behavior on the connected components of each interval.

\begin{lemma}\label{lem:add_gives_cont}
Let $\alpha$ be an additive amplitude on $\permodx$. 
Then, for all $I, I_1, \dots, I_n \in \AlgInt$ with $I = \bigcup_{i=1}^{n} I_i$ and $I_i\cap I_j=\emptyset$ for all $i\neq j$, it holds that
\[
\AmpMeas(I) = \sum_{i= 1}^{n} \AmpMeas(I_i)\, .
\]
\end{lemma}

\begin{proof}
Choose a common encoding map $e \colon \Pos \to \Pos'$ of class $\mathfrak X$, as well as encoding modules $F$ and $F_i$, for $\Field[I]$ and $\Field[I_i]$ respectively. 
Then, using the definition of function on intervals, Lemma \ref{L:prop funct interv}, Proposition \ref{prop:char_additive_amp}, and \cref{rmk_encoding_disjoint_intervals}, we have
\begin{equation*}
    \AmpMeas(I) = \alpha(\Field[I]) = 
    \sum_{q \in e(I)} \AmpMeas(e^{-1}(q)) = \sum_{i= 1}^{n}\sum_{q \in e(I_i)} \AmpMeas(e^{-1}(q)) = \sum_{i= 1}^{n} \alpha(\Field[I_i]) = \sum_{i= 1}^{n}\AmpMeas(I_i) \, . \qedhere
\end{equation*}
\end{proof}

We now recall a classical notion from measure theory, which is also known as \emph{valuation} in integral geometry (see for example \cite{book_valuations}).

\begin{definition}
A \define{content} on an algebra $\mathfrak A$ is a function $\cont \colon \mathfrak A \to [0,\infty]$ 
such that:
\begin{enumerate}
    \item $\cont(\emptyset) = 0$;
    \item For all $S_1, S_2 \in \mathfrak A$ with $S_1 \cap S_2=\emptyset$, $\cont (S_1 \cup S_2) =  \cont(S_1) + \cont(S_2)$.
\end{enumerate}
By a \define{content on $\AlgInt$} we mean a function $\cont \colon \mathfrak X \to [0,\infty]$ which extends to a content on $\mathfrak X$.
\end{definition}

\begin{remark}
    Note that as $\mathfrak X$ is generated by $\AlgInt$ under disjoint unions, a function $\cont \colon \mathfrak X \to [0,\infty]$ extends to a content on $\mathfrak X$ if, and only if for all $I, I_1, \dots, I_n \in \AlgInt$, for some $n\geq 0$, with $I = \bigcup_{i=1}^{n} I_i$ and $I_i\cap I_j=\emptyset$ if $i\neq j$, it holds that $\cont( I ) =  \sum_{i = 1}^{n} \cont(I_i)$. 
    Furthermore, this extension is unique, so we really will not distinguish between contents on $\AlgInt$ and on $\mathfrak X$.     
\end{remark}

By the \cref{lem:add_gives_cont}, we have that, for every additive amplitude $\alpha$, its function on intervals defines a content on $\mathfrak X$. 
\medskip

Recall that a function $f \colon S \to \mathbb N$ is called measurable with respect to an algebra $\mathfrak A$ on $S$ if each of its fibers lies in $\mathfrak A$. 
Furthermore, measurable functions $f \colon S \to \mathbb N$ can be integrated with respect to a content $\cont$ on $\mathcal A$ via the formula $\int f d \cont = \sum_{n \in \mathbb N} f(n) \cont(f^{-1}(n))$.

\begin{remark}\label{ex:Hilbert_meas}
    If $M \in \permodx$, then its Hilbert function 
    \begin{align*}
       \dim_{M} \colon \Pos \to \mathbb N \\
        q \mapsto \dim M_q
    \end{align*}
    is $\mathfrak X$-measurable. 
Indeed, choose any encoding function $e \colon \Pos \to \Pos'$ for $M$. 
Then every fiber of $\dim_M$ is given by a finite union of fibers of $e$. 
As any such fiber is an interval of $\mathfrak X$, measurability follows. 
\end{remark}

We can then phrase the relationship between contents and additive amplitudes as follows.
\begin{theorem}\label{thm:classification_of_additive_amp}
The assignment $\alpha \mapsto \mu_\alpha$ induces a bijection between additive amplitudes of $\permodx$ and contents on $\mathfrak X$. 
The inverse map is given by 
\[\cont \mapsto \{ M \mapsto \int \dim_{M} d \cont \}\, .
\]
\end{theorem}

\begin{proof}
By \cref{lem:add_gives_cont}, we know that for any additive amplitude $\alpha$ its function on intervals $\mu_\alpha$ is a content on $\AlgInt$.

Conversely, we denote by $\cont$ any content on $\mathfrak X$.
We have that the integral $ M \mapsto \int \dim_{M} d \mu$ is well defined by \cref{ex:Hilbert_meas}.
It is immediate from the additivity of Hilbert functions under short exact sequences that this expression does indeed define an amplitude on $\permodx$. 

It remains to see that the two constructions are inverse to each other. 
Since $ \langle\AlgInt\rangle = \mathfrak X$, it is enough to show that given a content $\cont$ on $\mathfrak X$ we can recover the content on $\AlgInt$. 
Indeed, for any $I \in \AlgInt$ we have
\[
\int \dim_{\Field[I]} d \mu = \mu(I).
\]
On the other hand, if we begin with an amplitude $\alpha$, then for any $M \in \permodx$ and $e \colon \Pos \to \Pos'$ part of an $\mathfrak X$-encoding of $M$ using \cref{prop:char_additive_amp}, we have
\begin{align*}
    \alpha(M) & = \sum_{q \in \Pos'} \dim (M'_q) \AmpMeas(e^{-1}(q)) = \sum_{n \in \mathbb N} n \sum_{q \in \Pos', \dim (M'_q) = n} \AmpMeas(e^{-1}(q))  \\
    &= \sum_{n \in \mathbb N} n \AmpMeas(\dim_M^{-1}(n)) 
    = \int \dim_{M} d\mu_\alpha \, . \qedhere
\end{align*}
\end{proof}

\begin{remark} 
In a sense, \cref{thm:classification_of_additive_amp} is a (monoid and) persistence modules version of the classical sheaf function correspondence of \cite[Thm. 9.7.1]{SheavesOnManifolds}. 
Indeed, in the setting of constructible persistence modules in the sense of \cite{berkouk2022persistence}, which correspond to constructible sheaves fulfilling certain microsupport conditions (see \cite{Berkouk_2021}) similar results were proven in \cite{berkouk2022persistence}.
Hence, one could have also obtained \cref{thm:classification_of_additive_amp} by computing the Grothendieck monoid of $\permodx$. 
Recall that the Grothendieck monoid of an Abelian category $\mathcal A$ (first defined in \cite{berenstein-greenstein})
is a commutative monoid $\mathcal M_{\mathcal A}$, together with a map $\ob \mathcal{A} \to \mathcal M_{\mathcal{A}}$ which sends $0$ to $0$, is additive under short exact sequences, and is universal amongst such monoid valued maps. 
$\mathcal{M}_{\mathcal{A}}$ may explicitly be constructed by taking the free monoid on $\ob \mathcal{A}$ (with $0$ as neutral element), and quotienting out by $A + C \sim B$, whenever there is a short exact sequence $0 \to A \to B \to C \to 0$. 
It follows from the universal property of the Grothendieck monoid that additive amplitudes on $\mathcal A$ correspond to monoid morphisms $\mathcal M_{\mathcal A} \to [0,\infty]$. 

If $\A = \permodx$, the Grothendieck monoid is the monoid of bounded maps $f \colon \Pos \to \mathbb N$ which are $\mathfrak X$-measurable. 
Denote the latter monoid by $\mathcal{M}_{\mathfrak X}^{n}(\Pos, \mathbb N)$. 
The bijection is induced by the Hilbert function. 
Indeed, by the same inductive argument as in the proof of \cref{prop:upper_and_lower_bound_for_amp}, one shows that the map sending $M$ to its Hilbert function is injective. 
Conversely, to see that bounded $\mathfrak X$-measurable $\mathbb N$ valued function may be obtained as a Hilbert function, one can show that the constant regions of such a function may further be subdivided into intervals of $\AlgInt$. 
Since indicator functions of intervals lie in the image of the Hilbert function construction, we obtain surjectivity.
Finally, one may verify that the set of monoid morphisms 
\[
\Hom_{\textnormal{Grpd}}( \mathcal{M}_{\mathfrak X}^{n}(\Pos, \mathbb N), [0, \infty])
\]
is equivalently the set of contents on $\mathfrak X$.
\end{remark}

\section{Discriminativity of amplitude metrics in TDA}\label{S:discriminativity}

In Section \ref{S_path_span_metrics} we introduced span metrics associated with an amplitude on a category of persistence modules. 
As we have seen, the span metric is an extended pseudometric. 
In this section, we investigate under which conditions the span metric is an extended metric on the set of isomorphism classes of persistence modules. 
In other words, when are two modules with amplitude-distance $0$ isomorphic?
To answer this question, we start with a first assumption: we only allow for amplitudes that do not send non-zero objects to $0$.

\begin{definition}
An amplitude $\alpha$ is \define{strict} if $\alpha(A)=0 \implies A \cong 0$ for all $A \in \A$.
\end{definition}

\begin{example}
Many classical examples of amplitudes are not strict, even in the one-parameter setting. 
Consider for example the shift amplitude on $\permod[]$ induced by the directed diameter sending an interval to its length.
Its metric is equivalent to the interleaving distance of $1$-parameter persistence modules.
For interval modules of the shape $\Field[p,p]$, the shift amplitude takes the value $0$. More generally, it is $0$ for all ephemeral persistence modules (see \cite{chazal2014observable,Berkouk_2021}). 
We may restore strictness by quotienting out by the subcategory of ephemeral persistence modules. 
This is equivalent to instead restricting to the encoding structure $\cCubes$, i.e. only allowing for half-open intervals closed at the left and open at the right (compare with \cite{Berkouk_2021}). 
\end{example}

The question of discriminativity of the span metric can be phrased as a convergence question for spans.
Specifically, does the existence of spans $A \leftarrow C_{\eps} \to B$ with arbitrarily small cost allow one to conclude that there is a span $A \leftarrow C_{0} \to B$ with cost $0$? 
In general, it is difficult to answer if one does not require some type of tameness assumption of the objects in use.
In the finitely generated case, for example, the diagonal shift amplitudes are known to induce a metric on isomorphism classes (see \cite{Lesnick2015}). 
In the framework of constructible sheaves in \cite{petit2021property} it is shown that interleaving type distances on categories of appropriately constructible sheaves have this property.

In this section, we show such a result for general amplitude distances over persistence modules under certain topological boundary assumptions. 
We begin by setting up the framework:

\begin{definition}
We say that a subspace $\mathbb P \subset \mathbb R^n$ is \define{locally open above} if for all $x \in \mathbb P$ there exists a neighborhood $U \subset \mathbb R^n$, such that $(x + \mathbb R^n_{\geq 0}) \cap U \subset \mathbb P$.
\end{definition}

Clearly, $\mathbb R^n$ is locally open above, and so is every interval given by the intersection of an upset and an open downset.

\begin{theorem}\label{thm:strict_dist}
Let $\mathbb P \subset \mathbb R^n$ be locally open above.
Let $\mathfrak X$ be a connective encoding structure on $\mathbb P$ fulfilling the following two properties:
\begin{enumerate}
    \item Every upset $U \in \mathfrak X$ is closed in $\mathbb{P}$;
    \item Every interval in $\mathbb P$ of the form $[p,q)$, $[p,\infty)$, $(-\infty,q)$, with $p,q \in \mathbb{P}$, is an interval of $\mathfrak X$.
\end{enumerate}
Let $\alpha$ be an amplitude on $\permodx[\mathbb P]$. 
Then $\alpha$ is strict if and only if the induced span metric on $\permodx[\mathbb P]$ induces a metric on isomorphism classes of $\permodx[\mathbb P]$.
\end{theorem}

\begin{remark}
    The assumptions on $\mathfrak X$ in \cref{thm:strict_dist} are fulfilled, for example, for the encoding structure of closed staircases $\cCubes$ on $\mathbb R^n$, as well as for any of the closed below encoding structures in \cref{ex:more_general_classes_of_enc_struct}. 
\end{remark}
For the remainder of the section, we assume $\mathbb P$ and $\mathfrak X$ as in the statement of \cref{thm:strict_dist}.

\begin{remark}
It is possible to drop the strictness assumption on $\alpha$ and locally openness above on $\mathbb{P}$ by passing to the quotient category in which objects with amplitude $0$ are identified with $0$, thus forcefully making $\alpha$ strict.
In many cases, the latter is of the shape $\permodx[\Pos][\mathfrak X']$ for some smaller algebra and $\Pos$ a locally open above subset of $\mathbb R^n$.

For example, if $\mathbb P = [0,1]^n$, $\mathfrak X$ is the encoding structure of cubes and we are considering the Hilbert amplitude with respect to the Lebesgue measure on $\mathbb{R}^n$, then said quotient category is equivalently given by persistence modules encoded by (open above) half open cubes on the half-open cube $[0,1)^n$ (compare \cite{Berkouk_2021}). 
Furthermore, one should note that the following proof of \cref{thm:strict_dist} also works in the case when $\mathbb P$ is discrete, with minor alterations.
\end{remark}

We begin with several remarks on the conditions of \cref{thm:strict_dist} before proving a series of intermediate results, which will result in the proof of \cref{thm:strict_dist}.
First, note that under the connectedness assumptions on $\mathfrak X$, whether two modules are isomorphic can generally be checked at finitely many points. 
This is made formal by the following proposition.

\begin{proposition}\label{prop:comp_set}
Let $\mathfrak X$ be a connective encoding structure over a poset $\Pos$.
Let $M,N$ be objects in $\permodx$.
Then there exists a finite subset of $S \subset \mathbb \Pos$, such that $M$ and $N$ are isomorphic if and only if $i^*_S(M) \cong i^*_S(N)$, where $i_S\colon S \hookrightarrow \Pos$ is the inclusion of posets.
\end{proposition}

\begin{proof}
Choose a common $\mathfrak X$-encoding $e\colon\Pos \to \Pos'$, with $M',N' \in \Vect^{\Pos'}$, of $M$ and $N$.
By \cite[Lem. 3.7]{Lukas}, we may without loss of generality assume that $e$ has $\leq$-connected fibers and that the relations in $\Pos'$ are generated by the images of relations under $e$. 
Denote by $S'$ the set of elements of $\Pos$ given by choosing, for every generating relation in $\Pos'$, a pair of elements $s \leq s' \in \Pos$ mapping to it.
As $\Pos'$ is finite, so is $S'$. 
Furthermore, denote by $S$ the set given by $S'$ together with, for each pair $s\leq s' \in S'$ which are contained in a common fiber of $e$, the elements of a zigzag connecting $s$ and $s'$ in $S'$.  
Then $e'\coloneqq e|_{S}$ fulfills the requirements of \cite[Prop. 3.6]{Lukas} and hence $e'^\ast$ is fully faithful. 
In particular, every isomorphism $e'^\ast M' \cong i_S^*M \xrightarrow{\sim} i_S^\ast N \cong e'^\ast N$ extends to an isomorphism $M' \xrightarrow{\sim} N'$. The latter then pullbacks to an isomorphism of $M$ and $N$ under $e$. 
\end{proof}

We call a subset $S \subset \Pos$ as in \cref{prop:comp_set} a \define{comparison set}.
The following lemma shows that in our framework the property of being a comparison set is stable under small perturbations. 

\begin{lemma}\label{lem:shift_comp_set}
Let $\mathbb P \subset \mathbb R^n$ be locally open above, $\mathfrak X$ an encoding structure given by closed sets on $\mathbb P$, and $S \subset \mathbb P$ a comparison set for $M,N$ in $\ob\permodx[\mathbb P]$. 
Then, for $\varepsilon >0$ small enough, $S + \Vec{\varepsilon}$ is again a comparison set.  
\end{lemma}

\begin{proof}
Using that $S$ is finite, this is immediate from the fact that, as $\mathfrak X$ is given by closed upsets and $\mathbb P$ is locally open for each $s \in S$, the morphisms $M_{s} \to M_{s+ \Vec{\varepsilon}}$ is an isomorphism, for $\varepsilon >0$ small enough. 
\end{proof}
Next, we need the following technical tool.

\begin{lemma}\label{lem:poinwise_iso_lemma}
Let $\mathfrak X$ be any connective encoding structure  on a poset $\Pos$, such that every interval $[p,\infty)$, $(-\infty,p)$, for $p\in \Pos_{-\infty,\infty}$, is an interval of $\mathfrak X$.
Let $\alpha$ an amplitude on $\permodx$ and $M,N$ in $\ob\permodx$, and $I \in \AlgInt$ be such that $M,N$ are constant (i.e. of the form $\Field^k[I]$) when restricted to $I$.
Let $p \in I$ and let \[
\varepsilon \coloneqq \min\{ \alpha(\Field[[p, \infty) \cap I)], \alpha(\Field  [ (-\infty, p] \cap I ] \}.\]  
Consider a span $M \xleftarrow{f} L \xrightarrow{g} N$
with cost $c_{\alpha}(f) + c_{\alpha}(g) < \varepsilon$, and the induced cospan $g'\colon M \to \hat{L} \leftarrow N \colon f'$
obtained by taking the pushout of the former span.
Then both $f'$ and $g'$ are given by isomorphisms at $p$.
\end{lemma}

\begin{proof}
Let us first assume $I = \Pos$.
Then we can consider the following diagram
\[ \begin{tikzcd}
                &   \ker(f) \arrow[two heads, r] \arrow[d] & \ker(f') \arrow[d]   & \\
            \ker(g) \arrow[r] \arrow[two heads, d]   &    L \arrow[d, "f" ] \arrow[r, "g"] & \Field^{k'}\left [I \right ] \arrow[d, "f'"] \arrow[r] \arrow[d]&  \coker(g) \arrow[d, two heads]\\
            \ker(g')    \arrow[r] &   \Field^k\left [I \right ] \arrow[r, "g'"] \arrow[d] & \hat L \arrow[d] \arrow[r] & \coker(g')  \\
            &  \coker(f) \arrow[r, two heads ]& \coker(f') &
\end{tikzcd}\]
with the inner square given by a pushout. As a consequence of the monotonicity of $\alpha$, we obtain that $\alpha(\coker(f)), \alpha(\coker(g)), \alpha(\ker(f')), \alpha(\ker(g')) < \varepsilon$.
Note that, by the epimorphisms in the diagram, for any point that does not lie in the support of the latter four modules, $f'$ and $g'$ are isomorphisms.
Furthermore, all kernels are submodules of a module $\Field^l [I]$, which has all structure morphisms given by monomorphisms.
In particular, any element at $m \in \ker(f)_p$ induces a submodule of the shape $\Field[[p, \infty) \cap I ]$.
Hence, by monotonicity of the amplitude, it follows that $p$ is not contained in the support of the kernels.
An essentially dual argument for the cokernels shows that $p$ is not contained in the support of the cokernels either, and the claim is proved. 

It remains to show that it is always possible to reduce to the case of $\Pos=I$.
To see this, consider the extension by $0$ functor $(-)|^{\Pos}\colon\permodx[I][\mathfrak X|_{I}] \hookrightarrow \permodx$ of \cref{prop:extension_by_zero}.
Since $(-)|^{\Pos}$ is exact, we can transport $\alpha$ to $\permodx[I][\mathfrak X|_{I}]$ along it.
Denote the inclusion $I \hookrightarrow \Pos$ by $i$. Then $i^*$ is an amplitude bounding functor (with constant $1$).
To see this, note that $(i^*M)|^{\Pos}$ is always a subquotient of $M$. Hence, by applying $i^*$ to the setup of the theorem and using that it preserves all finite limits and colimits, we can always reduce to the case $I = \Pos$.
\end{proof}

We now have everything necessary available to finish the proof of \cref{thm:strict_dist}.

\begin{proof}[Proof of \cref{thm:strict_dist}]
The sufficient condition is immediate as $\alpha(M) = \dist_{\alpha}(M,0)$. 

For the converse, assume $M,N$ in $\ob\permodx[\mathbb P]$ with $d_{\alpha}(M,N) = 0$.
Choose a common $\mathfrak X$-encoding map $e\colon \mathbb P \to \Pos$ of $M$ and $N$ and a comparison set $S \subset \mathbb{P}$.
Since $\mathbb{P}$ is locally open above, $S$ is finite, and $\mathfrak X$ is given by closed sets, we can find $\eps >0$ such that each set $s + [0,2\varepsilon)^n = [s,s + \Vec{2\varepsilon}) \subset \mathbb R^n$ is contained in a fiber of $e$, for $s \in S$. 
Thus, $M$ and $N$ are both constant when restricted to any of these intervals. 
In particular, let $\varepsilon$ be small enough, such that (the proof of) \cref{lem:shift_comp_set} applies, and hence $S'\coloneqq S+ \Vec{\varepsilon}$ is still a comparison set for $M$ and $N$.
Next, choose $0<\delta \leq \min\{ \alpha\left(\Field[s, s+\varepsilon), \Field[s+\varepsilon, s + 2\varepsilon)\right) \mid s \in S\}$.
Such a $\delta$ exists, as $\alpha$ is strict and $S$ is finite.
As $\dist_{\alpha}(M,N) = 0$, there exists a span $M \leftarrow L \to N$ of cost $< \delta$.
Then, by \cref{lem:poinwise_iso_lemma}, the pushout-induced cospan $M \to \hat L \leftarrow N$ is given by isomorphisms on $S'$,
i.e. we have an isomorphism $i_{S'}^\ast M  \cong i_{S'}^\ast N$. 
Since $S'$ is a comparison set, the claim is proved.
\end{proof}

\section*{Acknowledgments}
We would like to thank the organizers of the 2020 Applied Category Theory Adjoint School, for bringing us together in this collaboration.
Part of this work was initiated during a postdoctoral fellowship of NO and an internship of LW at the Max Planck Institute for Mathematics in the Sciences, and NO and LW would like to thank the  Max Planck Institute  for their support.
We thank H\aa vard B. Bjerkevik, Wojciech Chach\'olski, Adrian Clough, Nicol\`o De Ponti, Oliver G\"afvert, and Ezra Miller for useful discussions.
BG was supported by the grant FAR2019-UniMORE and by the Austrian Science Fund (FWF) grant numbers P 29984-N35 and P 33765-N. 
NO was partially supported by Royal Society Research Grant RGS\textbackslash R2\textbackslash 212169.
LW was partially supported by Landesgraduiertenförderung Baden-Württemberg.

\bibliographystyle{alpha}

\newcommand{\etalchar}[1]{$^{#1}$}

\end{document}